\theoremstyle{plain}
\newtheorem{prop}{Proposition}[section]
\newtheorem{thm}[prop]{Theorem}
\newtheorem{lemma}[prop]{Lemma}
\newtheorem*{thm*}{Theorem}
\newtheorem*{lemma*}{Lemma}
\newtheorem*{prop*}{Proposition}
\theoremstyle{definition}
\newtheorem*{defi}{Definition}
\theoremstyle{remark}
\newtheorem{remark}{Remark}
\numberwithin{table}{section}
\DeclareMathOperator{\ab}{ab}
\DeclareMathOperator{\Cl}{Cl}
\DeclareMathOperator{\Gal}{Gal}
\DeclareMathOperator{\Ind}{Ind}
\newcommand{\F}{\mathbb F}
\newcommand{\Om}{{\mathscr{O}}}
\newcommand{\Disc}{\Delta}
\newcommand{\GL}{{\rm GL}}
\def\ZZ{\mathbb Z}
\def\QQ{\mathbb Q}
\def\CC{\mathbb C}
\def\ff{\mathfrak f}
\def\C{\mathcal C}
\def\<#1>{{\left\langle{#1}\right\rangle}}
\def\abs#1{{\left|{#1}\right|}}
\def\Z{{\mathbb Z}}             
\def\Q{{\mathbb Q}}             
\def\id#1{{\mathfrak{#1}}}      
\def\normid#1{{\norm{\id{#1}}}}
\DeclareMathOperator{\norm}{{\mathscr N}}
\newcommand{\lmfdbec}[3]{\href{http://www.lmfdb.org/EllipticCurve/Q/#1/#2/#3}{{\text{\rm#1-#2-#3}}}}
\let\kro\dkro
\begin{document}

\title{On the equation $x^2+dy^6=z^p$ for square-free $1 \le d \le 20$}

\author{Franco Golfieri}
\address{FAMAF-CIEM, Universidad Nacional de
	C\'ordoba. C.P:5000, C\'ordoba, Argentina.}
\email{franco.golfieri@mi.unc.edu.ar}
\thanks{}

\author{Ariel Pacetti}
\address{Center for Research and Development in Mathematics and Applications (CIDMA),
	Department of Mathematics, University of Aveiro, 3810-193 Aveiro, Portugal}
\email{apacetti@ua.pt}
\thanks{AP was partially supported by FonCyT BID-PICT 2018-02073 and by
the Portuguese Foundation for Science and Technology (FCT) within
project UIDB/04106/2020 (CIDMA). FG and LVT were supported by a CONICET grant.}

\author{Lucas Villagra Torcomian}
\address{FAMAF-CIEM, Universidad Nacional de
  C\'ordoba. C.P:5000, C\'ordoba, Argentina.}
\email{lucas.villagra@unc.edu.ar}
\thanks{}

\keywords{$\Q$-curves, Diophantine equations}
\subjclass[2010]{11D41,11F80}

\begin{abstract}
  The purpose of the present article is to show how the modular method
  together with different techniques can be used to prove
  non-existence of primitive non-trivial solutions of the equation
  $x^2+dy^6=z^p$ for square-free values $1 \le d \le 20$ following the
  approach of \cite{PT}. The main innovation is to make use of the
  symplectic argument over ramified extensions to discard solutions,
  together with a multi-Frey approach to deduce large image of Galois
  representations.
\end{abstract}

\maketitle

\section*{Introduction}

The study of Diophantine equations has been a major research area
since ancient times, and it got a lot of attention during the last
twenty years after Wiles' proof of Fermat's last theorem. Of particular interest is to understand the set of solutions to the generalized Fermat's equation
\[
  AX^p + BY^q = C Z^r,
\]
for $\frac{1}{p}+ \frac{1}{q} + \frac{1}{r} <1$. In the present article, we focus on the particular equation 
\begin{equation}
  \label{eq:mainequation}
  \C_d:x^2+dy^6=z^n.
\end{equation}
Specializing (\ref{eq:mainequation}) at $y=1$ gives the so called
Lebesgue--Nagell equation (see for example \cite{MR1259344} and the
references therein). Not so long ago, all solutions of the
Lebesgue-Nagell equation where obtained for $1 \le d \le 100$ (in
\cite{MR2196761}).  Going back to equation (\ref{eq:mainequation}), we
are mostly interested in the case $n > 3$ so that
$\frac{1}{2} + \frac{1}{6} + \frac{1}{n} < 1$. In the article
\cite{MR2966716} the authors studied equation
(\ref{eq:mainequation}) for $d=1$ (as part of their study of
generalized Fermat's equations).  Their main result is that for
$n\ge 3$ there are no \emph{non-trivial} and \emph{primitive} solutions. Let us
recall these two notions.

\begin{defi}
  A solution $(A,B,C)$ to (\ref{eq:mainequation}) is called
  \emph{primitive} if $\gcd(A,B,C)=1$ . A solution is called
  \emph{non-trivial} if $ABC \neq 0$.
\end{defi}

Contrary to Fermat's problem, equation (\ref{eq:mainequation}) is
non-homogeneous, hence it does not determine a projective variety, but
an affine surface. In particular, the set of primitive and
non-primitive solutions are very different. Moreover, there are many
non-primitive solutions!

\begin{prop*}[Granville] Let $p>3$ be a prime number. The equation
  \[
    \C_d:x^2 + dy^6 = z^p,
  \]
  has infinitely many non-primitive solutions.
\end{prop*}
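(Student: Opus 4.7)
The plan is to exhibit an explicit infinite family of non-trivial, non-primitive solutions by a scaling argument. Starting from any pair of nonzero integers $(a,b)$, I would set $N = a^{2} + d b^{6}$, and then scale $a$ and $b$ by appropriate powers of $N$ itself so that the right-hand side becomes a perfect $p$-th power. The idea is that, although $N$ will typically not be a $p$-th power, the product $N \cdot N^{6k}$ is a $p$-th power as soon as $6k+1 \equiv 0 \pmod{p}$.

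The key observation is that $\gcd(p,6)=1$, because $p>3$ is prime. Hence one can pick a positive integer $m$ with $mp \equiv 1 \pmod{6}$, and set $k := (mp-1)/6$, which is a positive integer. Defining
\[
X := a\, N^{3k}, \qquad Y := b\, N^{k}, \qquad Z := N^{m},
\]
a direct computation gives $X^{2} + d Y^{6} = N^{6k}(a^{2} + d b^{6}) = N^{6k+1} = N^{mp} = Z^{p}$, so $(X,Y,Z)$ is a genuine solution of $\C_d$. Because $a,b \neq 0$ and $N \ge 1 + d > 1$, the triple is non-trivial, and since $N$ divides each of $X$, $Y$ and $Z$, it is also non-primitive.

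To produce infinitely many such triples, I would fix $b=1$ and let $a$ range over the positive integers; different values of $a$ yield different $N = a^{2}+d$, hence different $Z = N^{m}$, and so give distinct solutions. There is no serious obstacle: the whole argument reduces to the one-line identity $N \cdot N^{6k} = N^{mp}$, whose viability relies only on the fact that $p$ is coprime to $6$ (which in turn is the reason the hypothesis $p>3$ is needed). The only sanity check is that $k \ge 1$, i.e.\ $mp \ge 7$, which holds since $p \ge 5$ and $m \ge 1$.
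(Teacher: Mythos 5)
Your proof is correct and is essentially the same scaling argument as the paper's: the paper splits into cases $p\equiv 1\pmod 6$ (taking $m=1$, so $z=r$) and $p\equiv 5\pmod 6$ (taking $m=5$, so $z=r^5$), which are exactly the two residues your uniform choice of $m$ with $mp\equiv 1\pmod 6$ produces. The only cosmetic difference is that you phrase the choice of exponent once via the invertibility of $p$ modulo $6$, rather than writing out the two cases.
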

\begin{proof}
  Suppose that $p \equiv 1 \pmod 6$. Let $u,v \in \ZZ$ arbitrary so
  that the value $r=u^2+dv^6$ does not equal $\pm 1$. Then the point
  $(ur^{(p-1)/2},v r^{(p-1)/6},r)$ lies in $\C_d$. If
  $p\equiv 5 \pmod 6$, then the point
  $(ur^{(5p-1)/2},vr^{(5p-1)/6},r^5)$ lies in $\C_d$.
\end{proof}
\begin{remark}
  The ABC conjecture implies that besides the solutions with
  $z = \pm 1$ (which are finite by Faltings' theorem), all other
  primitive ones are finite. In particular, this is the case when
  $d>0$.
\end{remark}

\begin{remark}
  Sometimes when $d<0$ a solution to Pell's equation provides a point
  in $\C_d$ with $z = \pm 1$. Such a point provides a non-trivial
  solution for all values of $p$, making the method to fail in
  general. This is the case for example for $(u,v,d) =
  (31,2,-15)$. For this reason, we restrict to
  positive values of $d$.
\end{remark}

In \cite{PT} a general strategy to prove non-existence of
non-trivial solutions was presented, including a few examples (for both positive and negative values of $d$, see also \cite{2103.06965}). Let us
recall how the method works: to a putative solution $(A,B,C)$ of
(\ref{eq:mainequation}) of prime exponent $p$, we attach the
$\Q$-curve
\begin{equation}
  \label{eq:frey-curve}
E_{A,B}:y^2 +6B\sqrt{-d} xy -4d (A+B^3\sqrt{-d})y = x^3.
\end{equation}
Its discriminant equals $-2^83^3d^4C^p(A+B^3\sqrt{-d})^2$.  Note that
the trivial solution $(\pm 1,0,1)$ corresponds to a rational curve
with complex multiplication (CM from now on) by
$\Z[\frac{1+\sqrt{-3}}{2}]$.

The main result of the aforementioned article consists of constructing a
character $\chi$ such that $\rho_{E}\otimes \chi$ descends to a
rational representation, which is modular by Serre's conjectures (say
attached to a newforms $f_{A,B}$). In particular, an explicit formula
for the level and the Nebentypus of the modular form was presented.

Note that the discriminant of the curve $E_{A,B}$ is ``almost'' a
perfect $p$-th power (except for the primes $2$, $3$ and the ones
dividing $d$). Then the general strategy is to apply
Ribet's lowering the level result (as in \cite{MR1104839}) to get a
congruence between $f_{A,B}$ and another newform whose level is not
divisible by primes dividing $C$, i.e. its level is only  divisible
by $2, 3$ and the primes dividing $d$. Ribet's result is only valid
when the image of the residual representation (modulo $p$) is
absolutely irreducible.

\vspace{5pt}

\noindent {\bf Problem 1:} Give an explicit constant $N_d$ such that
if $p>N_d$ then the Galois representation $\rho_{E_{A,B},p}$ has
absolutely irreducible residual image.

\vspace{5pt}

Then if $p>N_d$, there exists a newform $g$ in $S_2(\Gamma_0(N),\varepsilon)$
where $N$ is divisible only by  $2, 3$ and the primes dividing $d$ such that $f_{A,B} \equiv g \pmod p$. In \cite[Theorem 6.3]{PT} an explicit formula for $N$ is given.

The remaining obstruction for deciding whether
equation~(\ref{eq:mainequation}) has a non-trivial solution or not,
comes from the challenge to discard all newforms in the space
$S_2(\Gamma_0(N),\varepsilon)$ (as not coming from a possible
solution). 

\vspace{5pt}

\noindent {\bf Problem 2:} How can we discard the forms with complex multiplication appearing in such space?

\vspace{5pt}

Modular forms with CM are in general harder to discard, as for example
the trivial solutions correspond to such a curves! However, trivial
solutions are the unique ones corresponding to an elliptic curve with
CM under our assumption that $d$ is square-free when $d \neq 2$ (by
Lemma~\ref{lemma:CMsolutions}), hence we can just discard them while
searching for non-trivial solutions.

\vspace{5pt}
\noindent {\bf Problem 3:} How can we discard the non-CM forms?

\vspace{5pt}

The purpose of the present article is to show how different approaches
appearing in the literature (and some generalizations) can be combined
to answer Problem 3 and in particular prove non-existence (under
certain hypothesis) of solutions
to equation (\ref{eq:mainequation}) for most integral square-free
values of $d$ in the range $[1,20]$.

\vspace{2pt}

Regarding Problem $1$, let us recall the following result due to
Ellenberg (\cite{MR2075481} and \cite{MR2176151}).

\begin{thm*}(Ellenberg) Let $E/\Q(\sqrt{-d})$ (with $d>0$) be a
  $\Q$-curve satisfying that there exists a prime $p>3$ of multiplicative
  reduction for $E$. Then there exist an integer $N_d$ such that the
  projective image of the residual representation of $\rho_{E,p}$ is
  surjective for all primes $\id{p}$ of norm greater than $N_d$.
\label{thm:ellenberg}
\end{thm*}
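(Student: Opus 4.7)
My plan is to follow the classical classification-of-image strategy of Mazur and Serre, but arranged so that every bound depends only on $d$ and not on $E$. Set $K=\Q(\sqrt{-d})$ and, for a rational prime $\ell$, let $\bar\rho_\ell$ denote the projectivisation of the mod-$\ell$ Galois representation of $\Gal(\bar\Q/K)$ on $E[\ell]$. The key structural input is that, since $E$ is a $\Q$-curve, $\bar\rho_\ell$ extends (up to quadratic twist) to a representation of $\Gal(\bar\Q/\Q)$; this lets me transfer constraints between $K$ and $\Q$ throughout the argument.

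First I would invoke Dickson's classification of the maximal subgroups of $\PGL_2(\F_\ell)$: the image of $\bar\rho_\ell$ either contains $\PSL_2(\F_\ell)$, lies inside a Borel subgroup, lies inside the normaliser of a split or non-split Cartan, or is one of the exceptional groups $A_4$, $S_4$, $A_5$. The exceptional case is eliminated for $\ell>60$ by cardinality, giving a trivial bound independent of $E$ and $d$.

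The Borel case is where the multiplicative reduction hypothesis does its work. If $\bar\rho_\ell$ is reducible over $K$, then $E$ (or a $K$-isogenous curve) admits a $K$-rational $\ell$-isogeny, and multiplicative reduction at some prime of $K$ above the rational prime $p>3$ forces the isogeny character, restricted to inertia at that place, to be either trivial or the mod-$\ell$ cyclotomic character. Combined with the $\Q$-curve twisting datum, which swaps the two diagonal characters under $\Gal(K/\Q)$, one obtains a non-CM non-cuspidal quadratic point on $X_0(\ell)$ of a very restricted type. I would then invoke Momose's theorem on isogeny characters over imaginary quadratic fields, together with the Mazur--Momose--Parent--Bilu results on quadratic non-CM points of $X_0(\ell)$, to bound $\ell$ in terms of $K$, hence of $d$.

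The most delicate case is the normaliser of a (split or non-split) Cartan. Here $\bar\rho_\ell$ becomes abelian on an index-two open subgroup of $\Gal(\bar\Q/K)$, whose fixed field is a quadratic extension of $K$ with ramification controlled by $\ell$ alone. Using the $\Q$-curve structure once more, $[E]$ descends to a non-cuspidal, non-CM point on $X_{\mathrm{sp}}^+(\ell)$ or $X_{\mathrm{ns}}^+(\ell)$ over a number field whose degree is bounded in terms of $d$; the multiplicative reduction hypothesis together with $d$ square-free excludes the CM exceptions. I would then appeal to the uniform bounds of Bilu--Parent (and Bilu--Parent--Rebolledo) on rational and low-degree points of these modular curves. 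This non-split Cartan sub-case is the main obstacle: the corresponding point counts are far harder than in the Borel case, and it is precisely here that Ellenberg's use of $\Q$-curve descent is essential to turn an a priori $K$-dependent bound into one depending only on $d$. Taking $N_d$ to be the maximum of the three bounds finishes the proof.
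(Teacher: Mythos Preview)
The paper does not itself prove this theorem; it quotes it from Ellenberg's work and then explains the \emph{analytic} criterion that Ellenberg actually uses: for a Petersson-orthogonal basis $\mathcal{F}$ of $S_2(\Gamma_0(N))$ one forms the averaged special value $(a_m,L_\chi)_N=\sum_{f\in\mathcal{F}}a_m(f)L(f\otimes\chi,1)$, and the projective image is large at $p$ as soon as the $p$-new part $(a_1,L_\chi)_{p^2}^{p\text{-new}}$ is nonzero. The bound $N_d$ then comes from explicit estimates of these quantities (Propositions in \cite{MR2075481} and \cite{MR2176151}, refined in \cite{MR2561200}), which is exactly what the paper later implements numerically.

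Your approach is entirely different and, as written, has a real gap. The Borel and exceptional cases are fine, but in the Cartan case you invoke ``uniform bounds of Bilu--Parent (and Bilu--Parent--Rebolledo) on rational and low-degree points'' of $X_{\mathrm{sp}}^+(\ell)$ and $X_{\mathrm{ns}}^+(\ell)$. Those results treat \emph{rational} points on the \emph{split} Cartan curve; they say nothing about non-split Cartan, and nothing about quadratic or higher-degree points in either case. Since your $\Q$-curve lives over $K=\Q(\sqrt{-d})$, descent at best produces a $K$-point (or a quadratic point) on the relevant modular curve, and uniform control of such points on $X_{\mathrm{ns}}^+(\ell)$ is not available from the literature you cite. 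Ellenberg circumvents this entirely: rather than studying points on Cartan modular curves, he observes that a non-surjective projective image forces a mod-$p$ congruence between the $\Q$-curve (descended to a classical newform) and a CM or Eisenstein form of level divisible by $p^2$, and then rules this out by showing the relevant averaged $L$-value is nonzero for $p$ large. The multiplicative-reduction hypothesis is used to guarantee the curve itself is not CM, so that such a congruence is genuinely obstructive. Your outline does not supply a substitute for this analytic step.
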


In \cite{MR2075481} it is explained how to get an explicit bound for
$N_d$ (see also \cite[Theorem 6]{MR2561200}). Concretely, let $N$ be a any positive integer, and $\chi$ the character corresponding to $K/\QQ$ (of conductor $\ff$). Let $\mathcal{F}$ be a Petterson-orthogonal basis for $S_2(\Gamma_0(N))$. Define 
\begin{equation*}
\left(a_m,L_{\chi}\right)_{N}=\sum\limits_{f \in \mathcal{F}} a_m(f)L(f \otimes \chi,1).
\end{equation*}
If $M|N$, define $(a_m, L_{\chi})_N^M$ as the contribution from the old forms of level M. Then Ellenberg's result \cite{MR2075481} states that for any prime $p$ for which 
\begin{equation}
\label{eq:p-new}
\left(a_1, L_{\chi}\right)_{p^2}^{p-\text{new}} = (a_1,L_{\chi})_{p^2} -p(p^2-1)^{-1}(a_1-p^{-1}\chi(p)a_p,L_{\chi})_p
\end{equation}
is non-zero, the residual image is large (see
Section~\ref{section:ellenberg} for more details).  Note that
Ellenberg's result is strong enough to solve Problem 1 and Problem 2,
since modular forms with complex multiplication do not have surjective
projective image. A new problem appears while trying to apply
Ellenberg's result, namely.

\vspace{5pt}

\noindent {\bf Problem 4:} How can be assure that the curve $E_{A,B}$
has a prime $p>3$ of multiplicative reduction?

\vspace{2pt}

\noindent In Section~\ref{section:multi-frey} we present a solution to Problem
$4$, in particular giving a complete answer to Problem $2$.

The article is organized as follows: the first section contains four
different approaches to attack Problem 3, namely properties that
newforms related to real solutions must satisfy! Each approach
can be thought of as a ``real solution test'', so while trying to
discard a particular newform, we run the four different tests on
it. If the form fails to pass one test, we automatically discard
it. The four different tests include the so called Mazur's trick, the
study of the local inertial type, the application of the symplectic
argument (as explained in \cite{1607.01218}, with some improvements for
$\Q$-curves), and the property that all our curves contain a rational
point of order $3$ over $\Q(\sqrt{-d})$.

The second section contains one of the novelties of the present
article, namely to use a multi-Frey approach to give a complete answer
to Problem 4. As will be explained in
Section~\ref{section:multi-frey}, we succeed to run the multi-Frey
approach thanks to Cremona's list of elliptic curves of conductor up
to $500000$ (available at \cite{lmfdb}).

The last section contains applications of all the previous ones to
study solutions of (\ref{eq:mainequation}) for $1 \le d \le 20$
square-free. Regarding such examples, there are three cases that
cannot be handled for computational reasons (the level of the space of
modular forms needed to be computed is too large). They correspond to
the values $d=10$, $d=14$ and $d=17$. The symplectic argument used to
discard newforms only provides ``partial results'', namely, it only
allows to discard primes $p$ satisfying certain congruence
conditions. In particular, in some cases we can only prove that
equation~(\ref{eq:mainequation}) does not have any solution for an
explicit positive density of primes. This is the case for $d=7$,
$d=11$ and $d=15$. Full new results are obtained for $d=5$, $d=13$ and
$d=19$.

The computations of the present article were done using Pari/GP
(\cite{PARI2}), Magma (\cite{MR1484478}) and Sage
(\cite{sagemath}). The Pari script implemented to compute Ellenberg's
bound, the one implemented (in Magma) to apply Mazur's trick and the
computations made in Magma (as well as their outputs) to certify the
present results can be found in the web page
\url{http://sweet.ua.pt/apacetti/research.html}.

\subsection*{Acknowledgments} We would like to thank Professor John
Cremona for explaining us how to compute building blocks for newforms
of degree two via the computation of the period matrix (as illustrated
in the examples section for $d=11$). We would like to thank Professor
Mike Bennett for fruitful discussions on how to use the multi-Frey
method and tables of rational elliptic curves to solve Problem 4. At
last (but not least) we thank Professor Samir Siksek for explaining us
how to use the existence of a three torsion point to discard elliptic
curves (as detailed in Section~\ref{section:3torsion}).

\section{Some tests to eliminate forms}
To easy notation, during the whole article, we will denote by $K$ the
field $\Q(\sqrt{-d})$.

\subsection{Mazur's trick}
\label{section:marzurtrick} Let $f \in S_2(\Gamma_0(N),\varepsilon)$
be a newform, where $N$ is the level obtained in \cite{PT} after
applying the lowering the level result of Ribet to a putative solution
$(A,B,C)$. Let $p$ and $\ell$ be different prime numbers such that
$\ell$ does not ramifying in $K/\Q$. Let $\id{l}$ be a prime in $K$
dividing $\ell$. Let $f^{\text{BC}}$ denote the base change of $f$ to
$K$. Recall that the $q$-expansion of $f^{\text{BC}}$ is given by the
following rule:
\begin{equation}
  \label{eq:BC}
  a_{\id{l}}(f^{\text{BC}}) =
  \begin{cases}
    a_\ell(f) & \text{ if } \normid{l} = \ell,\\
    a_\ell(f)^2-2\ell \varepsilon(\ell) & \text{ if }\normid{l} = \ell^2.
\end{cases}
\end{equation}
The idea behind Mazur's trick is to
study equation~(\ref{eq:mainequation}) modulo $\ell$, to get
information on $a_{\id{l}}(E_{A,B})$. Let
\[
  S_\ell = \{(\tilde{A},\tilde{B},\tilde{C}) \in \F_{\normid{(l)}} \; : \;
  \tilde{A}^2+d\tilde{B}^6=\tilde{C}^{\ell}\},
  \]
  discarding the trivial solution $(0,0,0)$, where $\normid{(l)}$
  stands for the norm of the ideal $\id{l}$. For each point
  $(\tilde{A},\tilde{B},\tilde{C}) \in S_\ell$, consider the curve
  $E_{\tilde{A},\tilde{B}}$ over $\F_{\normid{(l)}}$. Then, either:
  \begin{itemize}
  \item The curve $E_{\tilde{A},\tilde{B}}$ is non-singular, in which case if
    $(A,B,C)$ is an integral solution reducing to
    $(\tilde{A},\tilde{B},\tilde{C})$, we must have that
    $a_{\id{l}}(E_{A,B}) = a_{\id{l}}(E_{\tilde{A},\tilde{B}})$ and
      furthermore
      \[
\chi(\id{l}) a_{\id{l}}(E_{\tilde{A},\tilde{B}}) \equiv a_{\id{l}}(f^{\text{BC}}) \pmod p,
\]
or
\item The curve $E_{A,B}$ has bad reduction at $\id{l}$ in which case we are in the lowering the level hypothesis, and
  \[
a_\ell(f)^2 \equiv \varepsilon^{-1}(\ell) (\ell+1)^2 \pmod p.
    \]
  \end{itemize}
  In both cases, each side of the congruence can be computed, and if
  they happen to be different, we get a finite list of candidates for
  the prime $p$ dividing their difference.
   This method is very powerful, and running it for a few different
  values of $\ell$ allows to discard all newforms whose coefficient
  field (of the base change to $\Q(\sqrt{-d})$) does not match 
  $\Q(\chi)$. Sometimes, it does discard all possible forms (providing
  non-existence of solutions of (\ref{eq:mainequation})), however in
  many instances, there are newforms which systematically pass the test.

  \subsection{The local type} Let $K_\lambda$ be a finite extension of
  $\Q_\ell$. The local-Langlands correspondence gives a bijection
  between the set of automorphic representations of $\GL_2(K_\lambda)$
  and representations of the Weil-Deligne group of $K_\lambda$ (we
  will follow Carayol's normalization of \cite{MR870690}). Recall that
  a Weil-Deligne representation consists of a two dimensional complex
  representation $\rho$ of the Weil group $W(K_\lambda)$ together with
  a monodromy operator $N$. Let $\omega_1: W(\Q_\ell) \to \CC^\times$
  be the unramified character sending Frobenius to $\|\ell\|_\ell$
  (and use the same notation for its restriction to
  $W(K_\lambda)$). For $\ell \neq 2$, there are three different local
  types for a Weil-Deligne representation $\rho$ with trivial
  Nebentypus:

\begin{enumerate}
\item {\bf Principal Series}: the endomorphism $N = 0$ and 
$\rho = \chi \oplus \chi^{-1}\omega_1^{1-k}$
for some quasi-character $\chi:W(\QQ_p)^{\ab} \mapsto \CC^\times$.
\item{\bf Steinberg or Special Representation}: The endomorphism $N$
  is given by the matrix
  $\left(\begin{smallmatrix}0 & 1\\ 0 & 0 \end{smallmatrix} \right)$
  and the representation $\rho$ equals
  $\omega_1^r \left(\begin{smallmatrix}\chi \omega_1 & 0\\
      0 & \chi\end{smallmatrix} \right)$ for some quasi-character
  $\chi:W(K_\lambda) \mapsto \CC^\times$.
\item{\bf Supercuspidal Representation}: the endomorphism $N=0$ and
  $\rho=\Ind_{W(E)}^{W(K_\lambda)}\varkappa$ where $E$ is a
  quadratic extension of $K_\lambda$, and
  $\varkappa:W(E)^{\ab} \to \CC^\times$ is a quasi-character
  which does not factor through the norm map with a quasi-character of
  $W(K_\lambda)^{\ab}$.
\end{enumerate}

\begin{remark}
  There is a big difference between a principal series representation
  and a supercuspidal one, as the first one is reducible (and
  decomposable) while the second one is not.
\end{remark}

The local inertial type of an automorphic representation of
$\GL_2(\Q_\ell)$ is the isomorphism class of its restriction to the
inertia subgroup (which is related to the restriction of its
Weil-Deligne counterpart).

\begin{prop}
  \label{prop:localtypeinvariance}
  Let $\rho:\Gal_{K} \to \GL_2(\overline{\Q_p})$ be a modular Galois
  representation, and let $\id{l}$ be a prime of $K$ whose residual
  characteristic is prime to $p$. If the local type of $\rho$ at
  $\id{l}$ is principal series (respectively supercuspidal) given by a
  character $\chi$ (respectively $\varkappa$) of order $n$ prime to $p$
  then its reduction is also of principal series type (respectively
  supercuspidal) given by a character of order $n$.
\end{prop}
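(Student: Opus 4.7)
The plan is to exploit the hypothesis that $n$ is coprime to $p$: any character of finite order $n$ taking values in $\mu_n \subset \overline{\Zp}^\times$ injects into $\overline{\F}_p^\times$ under the residue map, so its reduction has the same order $n$. Throughout I would work with the Weil--Deligne representation attached to $\rho|_{W(K_{\id{l}})}$, whose existence is guaranteed by Grothendieck's $\ell$-adic monodromy theorem since the residual characteristic $\ell$ of $\id{l}$ is coprime to $p$; the inertial types under consideration (principal series and supercuspidal) are semisimple, so no information is lost after semisimplification of the reduction.

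First I would handle the principal series case, where the inertial type is $\chi|_{I_{\id{l}}} \oplus \chi^{-1}|_{I_{\id{l}}}$ (the twist by the unramified character $\omega_1^{1-k}$ becomes trivial on inertia). Since $\chi$ has order $n$ prime to $p$ and takes values in $\mu_n$, its reduction $\bar\chi$ still has order $n$, and the reduced inertial type is again the diagonal sum $\bar\chi|_{I_{\id{l}}} \oplus \bar\chi^{-1}|_{I_{\id{l}}}$, of principal series shape and determined by a character of order $n$.

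For the supercuspidal case I have $\rho|_{W(K_{\id{l}})} \simeq \Ind_{W(E)}^{W(K_{\id{l}})} \varkappa$ for some quadratic extension $E/K_{\id{l}}$. Since induction is an exact functor, the reduction is isomorphic to $\Ind_{W(E)}^{W(K_{\id{l}})} \bar\varkappa$, and the same identity holds after restricting to inertia. The decisive point is to verify that this induced representation remains irreducible, so that the reduction stays supercuspidal and does not degenerate into a principal series. Irreducibility is equivalent to $\varkappa \neq \varkappa^\sigma$, where $\sigma$ generates $\Gal(E/K_{\id{l}})$. The ratio $\varkappa^\sigma\varkappa^{-1}$ takes values in $\mu_n$ (since each factor does) and is non-trivial by the assumed supercuspidality of $\rho$, hence has order dividing $n$ and prime to $p$; therefore its reduction remains non-trivial, giving $\bar\varkappa^\sigma \neq \bar\varkappa$ as required.

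The main obstacle is precisely this irreducibility step in the supercuspidal case: without the hypothesis $\gcd(n,p)=1$, the reduction of $\varkappa^\sigma\varkappa^{-1}$ could a priori become trivial modulo $p$, forcing the induction to split into a sum of two characters and changing the inertial type from supercuspidal to principal series. The order-coprime-to-$p$ assumption is exactly what rules out this degeneration, and it is also the reason the reducing character retains its original order $n$ in both cases.
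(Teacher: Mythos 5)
Your proof is correct, and the principal series part coincides with the paper's argument (reduction modulo $p$ preserves the order of a finite-order character whose order is prime to $p$, because $\ker\bigl(\overline{\ZZ_p}^\times \to \overline{\F_p}^\times\bigr)$ is pro-$p$). In the supercuspidal case you take a slightly different route: the paper works with the condition that $\varkappa$ does not factor through the norm map, argues by contradiction, and uses the Teichm\"uller lift to promote a hypothetical factorization $\tilde\varkappa = \norm\circ\tilde\theta$ to a factorization of $\varkappa$ itself. You instead use the equivalent irreducibility criterion $\varkappa \neq \varkappa^\sigma$ (equivalent via Hilbert~90, since $\ker\norm = \{x/\sigma(x)\}$), observe that the obstruction character $\varkappa^\sigma\varkappa^{-1}$ is non-trivial of order dividing $n$, and apply the same order-preservation lemma directly to it, with no contradiction and no lifting step. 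The two arguments prove the same thing from the same core fact; yours is a bit more streamlined, at the modest cost of invoking the Galois-twist criterion rather than the definition the paper gives for supercuspidal (a translation the paper does not spell out but which is standard). One small point worth making explicit if you write this up: the identification $\overline{\Ind\,\varkappa} \simeq \Ind\,\bar\varkappa$ requires fixing the Galois-stable lattice obtained by inducing the canonical $\overline{\ZZ_p}$-lattice in $\varkappa$, but since both sides turn out irreducible this is harmless.
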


\begin{proof} Let $K_\lambda$ denote the completion of $K$ at the
  ideal $\id{l}$.  By local class field theory, the character $\chi$
  appearing in the restriction $\rho|D_{\id{l}}$ to a decomposition
  group at $\id{l}$ comes from a Hecke character
  $\chi:W(K_\lambda) \to \overline{\Q_p}$. The kernel of the reduction
  map $\overline{\ZZ_p} \to \overline{\F_p}$ is a pro-p-group. Since
  the order of $\chi$ is prime to $p$, the reduced character
  $\tilde{\chi}$ has the same order as $\chi$, proving the first
  statement. In the supercuspidal case, the residual character
  $\tilde{\varkappa}$ has the same order as $\varkappa$, hence we are only led
  to prove it does not factor through the norm map (so the residual
  representation is also irreducible). If
  $\tilde{\varkappa} = \norm \circ \tilde{\theta}$, for some character
  $\tilde{\theta}: W(E) \to \overline{\F_p}$, let
  $\theta: W(E) \to \overline{\Q_p}$ be the Teichmuller lift of
  $\overline{\theta}$. Then $\kappa = \norm \circ \theta$ (as both
  characters have order prime to $p$ and their reductions coincide).
\end{proof}

\begin{remark}
  \label{rem:inertiatype}
  The representations $\rho$ appearing in the present article come
  from $\Q$-curves (whose modularity is known due to Serre's
  conjectures). Since the coefficient field of an elliptic
  curve is the rational one, any character (in both the principal
  series or the supercuspidal type) appearing at a prime of bad
  reduction has order $n$ with $\varphi(n) \le 2$. In particular,
  $n \in \{1,2,3,4, 6\}$. Then for $p>3$, the local type is preserved
  by congruences.
\end{remark}

The $\ell$-inertial type of the $p$-adic representation $\rho$ is
the isomorphism class of the restriction of $\rho$ to the
decomposition group of $\ell$.
%
There is an algorithm for given a rational newform and a prime
dividing the level, compute its local type (based on \cite{MR2869056})
which is implemented both in Sage and Magma. The problem is that in
most instances, the space $S_2(\Gamma_0(N),\varepsilon)$ has very large
dimension, making the computation unfeasible (from a computational
point of view).

\subsection{The symplectic argument}
\label{section:symplectic} The symplectic argument is a powerful tool to
study congruences between elliptic curves. The idea is to consider not
only when two elliptic curves have isomorphic residual
representations, but add information on how the isomorphism relates
their Weil's pairing. Its first version (and application)
appeared in \cite{MR1166121} (see \cite{1607.01218} for the different
historical applications and the latest results). Let us state two
instances of the symplectic argument that will be used in the present
article.

\begin{thm}
  \label{thm:classicalsymplectic}
  Let $\ell \neq p$ be primes with $p \ge 3$. Let $E, E'$ be elliptic
  curves over $\Q_\ell$ with multiplicative reduction. Suppose that
  $E[p] \simeq E'[p]$ as $\Gal_{\Q_\ell}$-modules. Assume furthermore
  that $p \nmid v_\ell(\Disc(E))$. Then $p \nmid v_\ell(\Disc(E'))$
  and furthermore, $E[p]$ and $E'[p]$ are symplectically isomorphic
  $\Leftrightarrow$ $\kro{v_\ell(\Disc(E))/v_\ell(\Disc(E'))}{p}=1$.

  Furthermore, both curves cannot be symplectically and anti-symplectically isomorphic.
\end{thm}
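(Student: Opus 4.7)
The plan is to use Tate's $\ell$-adic uniformization of both elliptic curves, which converts the Galois-module structure of the $p$-torsion into an explicit calculation inside $\overline{\Q_\ell}^\times$. For $E$ (and analogously $E'$) with split multiplicative reduction one has an isomorphism of $\Gal_{\Q_\ell}$-modules $E(\overline{\Q_\ell}) \simeq \overline{\Q_\ell}^\times/q_E^{\ZZ}$ with $v_\ell(q_E)=v_\ell(\Disc(E))=:n$; the non-split case reduces to the split one after an unramified quadratic twist, which does not affect the arguments below. Fix the bases $(e_1,e_2)=(\zeta_p,q_E^{1/p})$ of $E[p]$ and $(e_1',e_2')=(\zeta_p,q_{E'}^{1/p})$ of $E'[p]$, and set $n':=v_\ell(\Disc(E'))$.

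Since $\ell \neq p$, the module $\mu_p$ is unramified at $\ell$, so a tame inertia generator $\tau$ acts by $\tau(e_1)=e_1$ and, by a direct Kummer computation, $\tau(e_2)=e_2+n\,e_1$ in additive notation on $E[p]$ (and likewise on $E'[p]$ with $n'$). The hypothesis $p\nmid n$ therefore makes $E[p]$ a non-split and ramified extension of $\ZZ/p\ZZ$ by $\mu_p$; transferring this property through the given isomorphism $E[p]\simeq E'[p]$ forces $p\nmid n'$, establishing the first assertion.

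For the symplectic claim, any Galois-equivariant isomorphism $\phi:E[p]\to E'[p]$ must send the unique $\mu_p$-subrepresentation to its counterpart, so $\phi(e_1)=a\,e_1'$ and $\phi(e_2)=b\,e_2'+c\,e_1'$ for some $a,b\in\F_p^\times$ and $c\in\F_p$. Equating $\phi(\tau e_2)$ with $\tau(\phi e_2)$ gives the key relation $na\equiv n'b \pmod p$, hence $b=na/n'$. The Weil pairing on a Tate curve evaluates to $e_p^E(e_1,e_2)=\zeta_p$, and similarly for $E'$; consequently $e_p^{E'}(\phi e_1,\phi e_2)=\zeta_p^{ab}=\zeta_p^{na^2/n'}$, so the scalar comparing the pullback of the Weil pairing on $E'$ to the Weil pairing on $E$ is $\alpha=na^2/n'$. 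This is a square in $\F_p^\times$ precisely when $n/n'$ is, which is exactly $\kro{n/n'}{p}=1$.

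For the final assertion, any two Galois isomorphisms $\phi,\psi:E[p]\to E'[p]$ differ by a Galois automorphism $g=\psi\phi^{-1}$ of $E'[p]$; reapplying the inertia calculation (now with $n'$ on both sides) forces the diagonal entries of $g$ in the basis $(e_1',e_2')$ to coincide, so $\det(g)$ is a square in $\F_p^\times$ and $\phi,\psi$ lie in the same symplectic class. The main delicate point I expect is pinning down the Kummer cocycle $\tau(q_E^{1/p})=\zeta_p^n\,q_E^{1/p}$ with precisely $n$ appearing, together with verifying that the unramified twist in the non-split multiplicative case does not disturb any of these quantities modulo squares.
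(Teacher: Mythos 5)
Your argument is correct and is the standard Tate-uniformization proof of this result. The paper itself does not give a proof here but simply cites Kraus--Oesterl\'e and Freitas--Kraus (Theorem~13), whose arguments follow the very same route: pass to the split Tate curve (an unramified quadratic base change, which leaves the inertia subgroup and hence the Kummer cocycle untouched), read off the inertia matrix $\left(\begin{smallmatrix} 1 & n \\ 0 & 1 \end{smallmatrix}\right)$ in the basis $(\zeta_p, q^{1/p})$, observe that $p\nmid n$ makes $\langle\zeta_p\rangle$ the unique inertia-fixed line so that any Galois isomorphism is upper-triangular in these bases, and compare determinants against the Weil pairing. The computation $na\equiv n'b\pmod p$, the resulting $\det\phi = ab = na^2/n'$, and the final ``differ by an automorphism of determinant $a^2$'' step are all exactly right; the one small point worth stating explicitly is that the ``unique $\mu_p$-subrepresentation'' is better characterized as the unique inertia-fixed line, since over $\Q_\ell$ with $\ell\equiv 1\pmod p$ the cyclotomic twist alone would not distinguish the sub from the quotient, whereas the nontrivial unipotent inertia action does.
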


\begin{proof}
  See \cite{MR1166121} (and \cite[Theorem 13]{1607.01218}).
\end{proof}

We also need a version for ramified extensions. Recall that if $K$ is
a local field and $E/K$ is an elliptic curve with potentially good
reduction, the \emph{defect} of $E$ is the degree of the minimal
extension over $K^{\text{ur}}$ where $E$ attains good reduction.

\begin{thm}
  \label{thm:ramifiedsymplectic}
  Let $\ell \equiv 2 \pmod 3$ be a prime number, $K$ be a quadratic
  ramified extension of $\Q_\ell$, and let $p \neq \ell$ be a prime
  with $p \ge 5$. Let $E, E'$ be two elliptic curves over $K$ with a
  point of order $3$ on $K_\id{l}$ (where $\id{l}$ is a prime in $K$ dividing $\ell$), with potentially good reduction of defect
  $3$. Set $r=0$ if $v(\Disc(E)) \equiv v(\Disc(E')) \pmod 3$ and
  $r=1$ otherwise. Suppose that $E[p]$ and $E'[p]$ are isomorphic as
  $\Gal_{K}$-modules. Then $E[p]$ and $E'[p]$ are symplectically
  isomorphic $\Leftrightarrow$
  $\kro{\ell}{p}^r=1$.

    Furthermore, both curves cannot be symplectically and anti-symplectically isomorphic.
\end{thm}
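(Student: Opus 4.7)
The strategy parallels the proof of Theorem~\ref{thm:classicalsymplectic}, replacing the Tate-curve analysis in the multiplicative case by an explicit description of the inertia action in the potentially good case. Let $K^{\mathrm{ur}}$ denote the maximal unramified extension of $K$; by the defect-$3$ hypothesis both $E$ and $E'$ acquire good reduction over the totally ramified cubic extension $L = K^{\mathrm{ur}}(\varpi^{1/3})$, where $\varpi$ is a uniformizer of $K$. The assumption $\ell \equiv 2 \pmod{3}$ ensures that $\mu_3 \not\subset K$, and since $\ell \ne 2,3$ the defect-$3$ hypothesis forces the Kodaira type of $E$ (and of $E'$) to be $IV$ or $IV^{*}$, so $v(\Disc(E))$ lies in $\{4,8\}$, and in particular is nonzero modulo~$3$.

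The first step is to pin down $\rho_{E,p}\vert_{I_K}$ explicitly. Since the mod-$p$ cyclotomic character is unramified at $\ell\ne p$, the restriction has trivial determinant on inertia, so after choosing an eigenbasis it acts as $\mathrm{diag}(\psi_E,\psi_E^{-1})$ for a cubic character $\psi_E:I_K\to\mu_3$; fixing a fundamental tame character $\theta:I_K\to\mu_3$, one has $\psi_E=\theta^{a_E}$ for some $a_E\in\{1,2\}$. A direct computation on a minimal Weierstrass model of $E$, comparing it with its good-reduction model over $L$ via the change of variables involving $\varpi^{1/3}$, shows that $a_E$ is determined by $v(\Disc(E))\pmod 3$. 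The hypothesis that $E$ has a $K$-rational $3$-torsion point $P$ is used to canonically choose the eigenbasis $(P_E,Q_E)$ (with $P_E$ a reduction of a basis element built from $P$ and $Q_E$ a Weil-pairing complement), making the identification of $\psi_E$ unambiguous and compatible across the two curves.

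Given the $\Gal_K$-isomorphism $\varphi:E[p]\to E'[p]$, I would transport these canonical eigenbases. If $r=0$, the two cubic characters agree ($\psi_E=\psi_{E'}$), so $\varphi$ is diagonal in the chosen bases; then the ratio of Weil pairings is automatically a square in $\F_p^\times$ and $\varphi$ is symplectic, consistent with $\kro{\ell}{p}^{0}=1$. If $r=1$ then $v(\Disc(E))+v(\Disc(E'))\equiv 0\pmod 3$, so $\psi_{E'}=\psi_E^{-1}$ and $\varphi$ is anti-diagonal; carefully tracking how the normalising factors $\varpi^{1/3}$ coming from the two good-reduction models interact with the Weil pairing shows that the scaling factor equals $\ell$ modulo squares in $\F_p^\times$, so $\varphi$ is symplectic iff $\kro{\ell}{p}=1$. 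Finally, the simultaneous existence of symplectic and anti-symplectic isomorphisms would produce an element of $\mathrm{Aut}_{\Gal_K}(E[p])$ of non-square determinant; but the explicit diagonal action of inertia (with $\psi_E\ne\psi_E^{-1}$) restricts this commutant to scalar matrices (directly if Frobenius is not split on the eigenline decomposition, and by a short Frobenius-swap argument otherwise), so every such automorphism has square determinant, ruling out this configuration.

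The main obstacle will be step two---the precise Weierstrass computation identifying $\psi_E$ in terms of $v(\Disc(E))\pmod 3$, together with the tracking of the $\ell$ scaling factor of the Weil pairing in the anti-diagonal case---beyond which everything reduces to matching explicit $2\times 2$ matrices.
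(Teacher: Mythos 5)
Your overall plan---realize $\rho_{E,p}|_{I_K}$ as a pair of conjugate cubic tame characters, identify the character from $v(\Disc(E)) \bmod 3$, and track the Weil-pairing scaling under a $\Gal_K$-isomorphism---is the same route the paper takes, which imports the explicit Frobenius and inertia matrices from Lemma~25 of \cite{1607.01218} rather than rederiving them.

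Two steps do not work as written, however. First, the $K$-rational $3$-torsion point cannot furnish a ``canonical eigenbasis'' for $E[p]$: for $p \ge 5$, $E[3]$ and $E[p]$ are unrelated $\Gal_K$-modules, and there is no natural way to pass from a $3$-torsion point to a choice of $p$-torsion basis. In \cite{1607.01218} the basis is produced from the good-reduction model over the cubic extension $L$; what the hypotheses $\ell \equiv 2 \pmod 3$ and $K/\Q_\ell$ ramified actually buy is that the cube roots of unity $\mu_3$ do not lie in $K$, so cubing is a bijection on the residue field (their Lemma~15) and $K(E[p])/K$ is non-abelian (their Corollary~5). Second, in the $r=0$ case, the fact that $\varphi$ is diagonal in the inertia eigenbasis does \emph{not} make $\det\varphi$ automatically a square---a diagonal matrix can have any determinant. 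What pins it down is the Frobenius constraint: since $\ell \equiv 2 \pmod 3$, Frobenius swaps the two inertia eigenlines, the representation is absolutely irreducible, and its commutant consists of scalars (this is what makes the symplectic type well-defined, the final assertion of the theorem); whether the resulting isomorphism is symplectic then requires comparing with the explicit Frobenius matrix, whose determinant is $\chic(\Frob) = \#\F_K = \ell$ (note $K/\Q_\ell$ ramified forces $f=1$, which is why $\ell$ and not $\ell^2$ appears). That is precisely the content of Lemma~25 of \cite{1607.01218}, and it is the step you yourself flag as not carried out. Your outline also mislocates where the $\ell$ comes from: it arises from $\det\rho(\Frob)$, not from the uniformizer $\varpi^{1/3}$ scaling the Weil pairing.
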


\begin{proof}
  The proof mimics Case 1 in page 79 of \cite{1607.01218}. A key property
  of our hypothesis is that the fact that $\ell \equiv 2 \pmod 3$ and
  that $K/\Q_\ell$ is ramified, implies that there are no cubic roots
  of unity in $K$ (besides the trivial one), hence the extension of
  $K$ obtained by adding the $p$-torsion points is not abelian (by
  Corollary 5 of \cite{1607.01218}).

  The fact that there are no cubic roots of unity in $K$, implies that
  raising to the third power is a bijection on the residue fields,
  hence Lemma 15 of loc. cit. holds without any change. In particular,
  the same matrix representation of Frobenius and inertia given in
  Lemma 25 holds. With these ingredients at hand, it is clear that the
  proof of Case 1 in page 79 of \cite{1607.01218} holds mutatis mutandis.
\end{proof}



\subsection{Using the $3$-torsion information}
\label{section:3torsion}
Consider the following general situation: Let $E$ and $\tilde{E}$ be
rational elliptic curves. Let $N$ be a prime number, and suppose that
$E$ has an $N$-torsion point and that for a large prime $p$ there is
an isomorphism of Galois modules $E[p] \simeq \tilde{E}[p]$. How does
the $N$-torsion point of $E$ affect the curve $\tilde{E}$?  We thank
Professor Samir Siksek for explaining us the following result.

\begin{thm}
  There exists an
  explicit constant $M$ (depending only on the conductor of $\tilde{E}$) such that if
  $p \ge M$, then the elliptic curve $\tilde{E}$ has a
  $N$-rational point, or it is $N$-isogenous to a curve with an
  $N$-rational torsion point.
\label{thm:Samir}
\end{thm}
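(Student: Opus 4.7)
The plan is to argue by contradiction, combining the trace congruence forced on $E$ by the existence of a rational $N$-torsion point, an effective Chebotarev-type argument for $\rho_{\tilde{E}, N}$, and Tate's description of $\rho_{E,p}$ at primes where $E$ has multiplicative reduction. First, the rational $N$-torsion point of $E$ gives
\[
\rho_{E,N} \sim \left(\begin{smallmatrix}1 & * \\ 0 & \chic\end{smallmatrix}\right) \pmod N,
\]
so for every prime $\ell \ne N$ of good reduction for $E$ one has $a_\ell(E) \equiv 1 + \ell \pmod N$. Suppose for contradiction that $\tilde E$ has neither a rational $N$-torsion point nor is $N$-isogenous to a curve carrying one. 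Writing $\rho_{\tilde E, N}^{\mathrm{ss}} = \chi_1 \oplus \chi_2$ with $\chi_1\chi_2 = \chic$ whenever reducible, this hypothesis amounts to: either $\rho_{\tilde E, N}$ is irreducible, or it is reducible with both $\chi_1 \ne 1$ and $\chi_2 \ne 1$. In either case a short group-theoretic argument shows that the set of $\sigma \in G_{\QQ}$ satisfying $\mathrm{tr}(\rho_{\tilde E,N}(\sigma)) \not\equiv 1 + \det(\rho_{\tilde E,N}(\sigma)) \pmod N$ has positive density.

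Applying an effective Chebotarev estimate (e.g.\ Lagarias--Odlyzko) to the Galois extension cut out by $\rho_{\tilde E, N}$ --- whose Artin conductor divides $N\cdot\cond(\tilde E)$ --- we obtain a prime $\ell$ in this set with $\ell \le B(\cond(\tilde E), N)$ for some explicit $B$; after excluding finitely many exceptions we may assume $\tilde E$ has good reduction at $\ell$, $\ell \nmid Np$, and $a_\ell(\tilde E) \not\equiv 1 + \ell \pmod N$. We then split according to the reduction type of $E$ at $\ell$. If $E$ has good reduction at $\ell$, the isomorphism $E[p] \simeq \tilde E[p]$ yields $a_\ell(E) \equiv a_\ell(\tilde E) \pmod p$, while the rational $N$-torsion of $E$ gives $a_\ell(E) \equiv 1+\ell \pmod N$; hence $a_\ell(E) - a_\ell(\tilde E)$ is a non-zero integer (its reductions modulo $N$ differ) of absolute value at most $4\sqrt{\ell}$ divisible by $p$, so $p \le 4\sqrt{\ell}$. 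If instead $E$ has bad reduction at $\ell$, then since $\tilde E$ has good reduction there and $E[p]\simeq\tilde E[p]$, the representation $\rho_{E,p}$ is unramified at $\ell$; Tate's theory forces $E$ to have multiplicative reduction at $\ell$ with $p \mid v_\ell(\Disc(E))$, and consequently $\mathrm{tr}(\rho_{E,p}(\Frob_\ell)) \equiv \pm(\ell+1) \pmod p$. Via the isomorphism this gives $a_\ell(\tilde E) \equiv \pm(\ell+1) \pmod p$, and since $|a_\ell(\tilde E)| \le 2\sqrt\ell < \ell+1$ this again bounds $p$. Setting $M$ slightly larger than the maximum of these two bounds produces the desired contradiction.

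The main obstacle is producing a genuinely explicit and sharp value of $B(\cond(\tilde E), N)$. Unconditional Chebotarev bounds are polynomial in the discriminant of the splitting field of $\rho_{\tilde E, N}$ and, while sufficient in principle, tend to be far from optimal. A much sharper bound can be extracted by enumerating the finitely many possible images of $\rho_{\tilde E, N}$ inside $\GL_2(\F_N)$; for the application of interest ($N=3$, with $\GL_2(\F_3)$ of order $48$) the classification reduces to a handful of subgroup types (Borel-reducible with both characters non-trivial, normalisers of split/non-split Cartans, and the exceptional subgroups) and each can be analysed directly to produce a small witness prime depending only on $\cond(\tilde E)$.
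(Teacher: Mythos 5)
Your argument is correct, but it takes a genuinely different route from the paper's. The paper also reduces to finding a small ``witness prime'' $q$ with $a_q(\tilde E)\not\equiv 1+q \pmod N$, but bounds it via the Sturm bound for the space $S_2(\Gamma_0(\cond(\tilde E)))$ containing the newform attached to $\tilde E$ (using modularity of $\tilde E$), rather than via an effective Chebotarev theorem applied to the splitting field of $\overline{\rho}_{\tilde E,N}$. The two approaches trade off differently: the Sturm bound is linear in $\cond(\tilde E)$ and hence yields far tighter constants $M$ in practice (this is what makes Remark~\ref{remark:siksek} computationally viable), whereas unconditional Lagarias--Odlyzko bounds are polynomial in the discriminant of a degree-$\approx|\GL_2(\F_N)|$ extension and much worse. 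On the other hand your route is more robust: it makes no use of modularity of $\tilde E$ and therefore adapts without change to settings --- such as Bianchi modular forms over imaginary quadratic fields --- where Sturm bounds are not readily available (precisely the limitation the paper flags in Remark~\ref{remark:siksek}). You also treat more carefully the case in which the witness prime $\ell$ is one of bad reduction for $E$, via Tate's uniformisation and the unramifiedness of $\rho_{E,p}$ at $\ell$; the paper quietly restricts to primes of good reduction for both curves, which suffices for its intended application but is a point worth making explicit. One small caveat in your bad-reduction case: to conclude that $E$ must have \emph{multiplicative} (rather than additive potentially good) reduction at $\ell$ from unramifiedness of $\rho_{E,p}|_{I_\ell}$, you need $p$ larger than the possible defect (so $p>3$, and $\ell>3$); this is harmless since you may enlarge $M$, but should be stated.
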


\begin{proof}
  Since the curve $E$ has an $N$-rational point, the semisimplification
  of its residual representation $\overline{\rho_{E,N}}^{\text{ss}}$
  is isomorphic to $1 \oplus \chi_N$ (the cyclotomic character). In
  particular, $a_q(E) \equiv 1 + q \pmod N$ for all primes $q \neq N$
  of good reduction. On the other hand, since
  $E[p] \simeq \tilde{E}[p]$, $a_q(E) \equiv a_q(\tilde{E}) \pmod
  p$. By Hasse's bound, the numbers $\abs{a_q(E)} \le 2\sqrt{q}$, so
  if $p > 4\sqrt{q}$, $a_q(E) = a_q(\tilde{E})$. Since $\tilde{E}$ is
  modular (by \cite{MR1839918}), it is attached to a newform in
  $S_2(\Gamma_0(\id{N}))$ ($\id{N}$ being its conductor). Let $M$ be the Sturm
  bound for modular forms in $S_2(\Gamma_0(\id{N}))$ (as in \cite[Theorem
  9.21]{MR2289048}), so two forms in $S_2(\Gamma_0(\id{N}))$ are congruent
  modulo $p$ if and only if they eigenvalues are congruent modulo
  $p$ for all primes up to $M$.

  If $p \ge 4\sqrt{M}$, the newform attached to $\tilde{E}$ is
  congruent modulo $N$ to an Eisenstein series. In particular,
  $a_q(\tilde{E}) \equiv q+1 \pmod N$ for all primes $q \neq N$ of
  good reduction. This implies that
  $\overline{\rho_{\tilde{E},N}}^{\text{ss}} \simeq 1 \oplus \chi_N$
  (by the Brauer-Nesbitt theorem), hence either
  \[
    \overline{\rho_{\tilde{E},N}} \simeq \left( \begin{smallmatrix} 1
        & * \\ 0 & \chi_N\end{smallmatrix} \right) \qquad \text{ or }
    \qquad \overline{\rho_{\tilde{E},N}} \simeq
    \left( \begin{smallmatrix} \chi_N & * \\ 0 & 1\end{smallmatrix}
    \right).
    \]
    In the first case, $\tilde{E}$ has a rational point of order $N$,
    while in the second case, it has a rational subgroup of order $N$,
    whose quotient is an (isogenous) curve with a point of order $N$.
\end{proof}

\begin{remark}
  We will apply the previous result to elliptic curves over an
  imaginary quadratic field with a point of order $3$ and try to
  discard possible curves $\tilde{E}$ which do not have a $3$-torsion
  point, nor a $3$-isogenous curve with a point of order $3$. Although
  effective Sturm bounds for Bianchi modular forms are hard to get, we
  could use the fact that our curves are $\Q$-curves, hence
  are related to rational newforms to get one. However, from a computational
  point of view, it is more effective to overcome this issue via
  computing (if it exists) the first prime $\id{q}$ such that
  $a_{\id{q}}(\tilde{E}) \not \equiv 1 + \normid{(q)} \pmod 3$. If
  such a prime exists, then $E$ and $\tilde{E}$ cannot have Galois
  representations which are congruent modulo $p$ for
  $p > 4\sqrt{\normid{q}}$.
\label{remark:siksek}  
\end{remark}

\section{The multi-Frey approach}
\label{section:multi-frey} The idea of the multi-Frey technique (as
developed by S. Siksek in \cite{MR2215137} and \cite{MR2414954}) is to
attach not one elliptic curve to a putative solution, but many of
them. In this way, even if Mazur's trick fails for a particular value
in $S_\ell$ for one curve, it may work for another one. This is
precisely the case while dealing with equation (\ref{eq:mainequation})
for $d=1$ (see Section $5$ of \cite{MR2966716}). Unfortunately, using
the same method for other values of $d$, we did not succeed to discard
any solution that passed Mazur's trick for our original
$\Q$-curve. However, the existence of a new curve attached to a
solution turns out to be very useful to deal with Problem $4$.
More concretely, to a putative solution $(A,B,C)$ of
(\ref{eq:mainequation}), attach the rational elliptic curve
\begin{equation}
  \label{eq:multifreycurve}
  \widetilde{E_{A,B}} \; \; : \; \; Y^2=X^3+3dB^2X+2dA.
\end{equation}
Its discriminant equals
$\Delta(\widetilde{E_{A,B}}) = -1728\cdot d^2\cdot C^p$ and its
$j$-invariant $\frac{1728\cdot d\cdot B^6}{C^p}$. Since $(A,B,C)$ is
primitive, $\gcd(d,C)=1$, hence it has multiplicative reduction at all
primes dividing $C$ and additive reduction at all primes $\ell > 3$
dividing $d$ if $\ell^6 \nmid d$.

\subsubsection{Application to Problem 4.} Consider the particular case
of equation~(\ref{eq:mainequation}) when $n$ is prime and $z$ is only
divisible by the primes $2$ or $3$, i.e. the equation
\begin{equation}
  \label{eq:multprime}
C_p:  x^2+dy^6=(2^a3^b)^p.
\end{equation}
To a putative solution $(A,B,C)$ we attach the rational elliptic curve
as in (\ref{eq:multifreycurve}), with discriminant $-2^{ap-6} 3^{bp+3} d^2$ and $c$-invariants as follows:
\begin{equation}\label{eq:c-invariants}
	c_4=-2^43^2dy^2, \quad c_6=-2^63^3dx.
\end{equation}
We thank Professor Mike Bennett for the following local description of the curve.
\begin{prop}
  \label{prop:multifreyconductor}
  The model is minimal at all primes $\ell \geq 3$. Furthermore,
  suppose that $d$ is $6$-th power-free. Then
  \begin{itemize}
  \item The curve has additive reduction at each prime $\ell \mid d$, $\ell > 3$.
    
  \item If $3 \nmid d$, the model is minimal at $3$ and
    $v_3(N_E) \in \{ 2, 3 \}$, with the latter case possible only if
    $b=0$ or $(b,p)=(1,2)$.
  \item If $v_3(d) \in \{ 1, 2, 4, 5 \}$, then $E$ is minimal at $3$ and
$v_3(N_E)=5$.
\item If $v_3(d) =3$, $E$ is minimal at $3$ and $v_3(N_E) \in \{ 2, 3 \}$.
\item If $2 \nmid d$, and our model is minimal at $\ell=2$, we have
  $v_2(N_E) \in \{ 2, 3, 4, 5, 6 \}$. If it is non-minimal (whereby
  necessarily $ap \geq 6$) a minimal model has $c$-invariants
  $c_4=-3^2 dB^2$, $c_6=-3^3 dA$, minimal discriminant
  $\Delta_E = -2^{ap-6} 3^{bp+3} d^2$, and $v_2(N_E) \in \{ 0, 1 \}$.
\item If $2 \mid d$, $E$ is minimal at $2$ if $8 \nmid d$. Furthermore,
  \begin{enumerate}
  \item If $v_2(d) =1$, $v_2(N_E) \in \{ 2, 3, 4, 7 \}$.
    
  \item If $v_2(d) =2$, $v_2(N_E) =6$.
  \end{enumerate}
\item If $\nu_2(d) =3$, then either $E$ is minimal at $2$ and $v_2(N_E)  \in \{ 4, 5 \}$,
or $E$ is non-minimal at $2$, $2 \mid B$, and a minimal model has $2 \nmid N_E$.
\item If $v_2(d) =4$, then $E$ is minimal at $2$ and $v_2(N_E)  =6$.
\item If $v_2(d) =5$, then $E$ is not minimal at $2$ and a minimal model has 
$v_2(N_E)  \in \{ 2, 3, 4 \}$.
\end{itemize}
\end{prop}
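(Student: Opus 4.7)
The proof is a prime-by-prime application of Tate's algorithm, using the $c$-invariants $c_4 = -2^4 3^2 dB^2$ and $c_6 = -2^6 3^3 dA$ from~(\ref{eq:c-invariants}) together with $\Delta = -1728\, d^2 C^p = -2^{ap+6} 3^{bp+3} d^2$. The only primes where $\Delta$ vanishes are $2$, $3$, and primes dividing $d$; everywhere else $\widetilde{E_{A,B}}$ has good reduction. At each bad prime $\ell$, the plan is to compute $v_\ell(c_4)$, $v_\ell(c_6)$, $v_\ell(\Delta)$, test the Kraus minimality criterion, perform the appropriate change of variables if the model fails to be minimal, and then read off the conductor exponent from the resulting Kodaira symbol.

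The auxiliary input needed to pin down $v_\ell(A)$ and $v_\ell(B)$ comes from the identity $A^2 + dB^6 = (2^a 3^b)^p$ together with the primitivity condition $\gcd(A, B, 2^a 3^b) = 1$. For instance, at an odd prime $\ell \ge 5$ dividing $d$, we have $\ell \nmid C$, so reducing modulo $\ell$ forces $\ell \mid A$; since $d$ is $6$-th power-free, $v_\ell(\Delta) = 2 v_\ell(d) \le 10 < 12$, so the model is automatically minimal, and Tate's algorithm immediately yields additive reduction, establishing the first bullet.

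The remaining bullets split into cases according to $v_3(d) \in \{0,1,2,3,4,5\}$ and $v_2(d) \in \{0,1,2,3,4,5\}$. Within each subcase the primitivity relation determines the valuation of $A$ (and, when relevant, $B$) up to a small ambiguity, so $v_\ell(c_4)$, $v_\ell(c_6)$, $v_\ell(\Delta)$ become explicit functions of $a$, $b$, $p$ and $v_\ell(d)$. The minimality check then reduces to verifying whether $v_\ell(c_4) \ge 4$, $v_\ell(c_6) \ge 6$, and $v_\ell(\Delta) \ge 12$ hold simultaneously. When they do, the substitution $(X, Y) \mapsto (\ell^2 X, \ell^3 Y)$ divides $c_4, c_6, \Delta$ by $\ell^4, \ell^6, \ell^{12}$ respectively, producing the minimal model; this is exactly what yields the alternative invariants $(c_4, c_6, \Delta) = (-3^2 dB^2,\, -3^3 dA,\, -2^{ap-6} 3^{bp+3} d^2)$ listed in the $2 \nmid d$, $ap \ge 6$ case.

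The main obstacle is the prime $\ell = 2$, where all six subcases according to $v_2(d)$ must be treated and, within each of them, one must distinguish minimal from non-minimal reductions and resolve the precise Kodaira type from congruences of $A$ and $B$ modulo higher powers of $2$. The stated ranges for $v_2(N_E)$ reflect exactly which Kodaira symbols are actually attained; verifying that no additional case is missed is essentially a bookkeeping exercise via Papadopoulos's tables at $\ell = 2$, made manageable by the short list of admissible $2$-adic valuation profiles of $(A, B)$ that survive primitivity together with the relation $A^2 + dB^6 = (2^a 3^b)^p$.
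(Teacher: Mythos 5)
Your approach coincides with the paper's (whose proof is a one-line appeal to Papadopoulos): compute the $c$-invariants and discriminant, pin down the relevant $\ell$-adic valuations using primitivity and $A^2+dB^6=(2^a3^b)^p$, check Kraus-type minimality, and consult Tate's algorithm or Papadopoulos's tables to extract the conductor exponent. One small slip worth noting: at a prime $\ell\geq 5$ with $\ell\mid d$, reducing the identity modulo $\ell$ (with $\ell\nmid C$) gives $A^2\equiv C^p\not\equiv 0\pmod{\ell}$, so in fact $\ell\nmid A$, not $\ell\mid A$ as you assert; this is harmless because your verification of minimality and additive reduction uses only $v_\ell(\Delta)=2v_\ell(d)\leq 10<12$ and $v_\ell(c_4)\geq v_\ell(d)\geq 1$, never the claimed divisibility of $A$.
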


\begin{proof}
Follows from a straightforward computation using \cite{MR1225948}.
\end{proof}

For a particular value of $d$, we can search Cremona's tables
(\cite{MR1628193}) of elliptic curves, as available on the LMFDB
\cite{lmfdb} and from their discriminant, we get a finite list of
possibilities for $a$ and $b$.

\begin{thm}
  \label{thm:Bennet}
  Let $2 \le d \le 19$ be a square-free positive integer. Then for
  each value of $d$, the value of $p$ in (\ref{eq:multprime}) is
  bounded by the values of Table~\ref{table:Bennett}.
   \begin{table}[h]
  	\begin{tabular}{|l|c||l|c||l|c||l|c|}
  		\hline
  		$d$ & Bound &   $d$ & Bound &   $d$ & Bound &  $d$ & Bound \\
  		\hline
  		$2$ & $3$  & $3$ & $2$ & $5$ &  $2$ & $6$ & $-$\\
  		\hline
  		$7$ & $7$& $10$ &  $-$& $11$ & $5$& $13$ & $-$\\
  		\hline
  		$14$ &  $-$ & $15$ & $3$& $17$ &  $2$& $19$ & $-$\\
  		\hline
  	\end{tabular}
  	\caption{Bound for curves having a multiplicative reduction prime. 	The ``$-$'' symbol means that $C$ cannot be supported in $\{2,3\}$.}\label{table:Bennett}
  \end{table}
\end{thm}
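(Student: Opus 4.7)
The plan is to reduce the problem to a finite search through Cremona's database of rational elliptic curves. Start from a putative non-trivial primitive solution $(A,B,C)$ of \eqref{eq:multprime} with $C=2^a3^b$, and attach to it the rational Frey curve $\widetilde{E_{A,B}}$ of \eqref{eq:multifreycurve}. By Proposition~\ref{prop:multifreyconductor}, its conductor $N_E$ is supported in $\{2,3\}\cup\{\ell:\ell\mid d\}$, and the exponents $v_2(N_E)$ and $v_3(N_E)$ are confined to explicit small sets that depend only on $v_2(d)$ and $v_3(d)$; combining this with the known additive exponents at the primes $\ell\mid d$ with $\ell>3$, we obtain for each square-free $d\in\{2,\dots,19\}$ a small finite list $\mathcal{L}_d$ of allowed conductors, every one of which is comfortably below the LMFDB cutoff of $500000$.

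Next, I would query the LMFDB \cite{lmfdb} for every rational elliptic curve of conductor in $\mathcal{L}_d$. For each candidate $E_0$, one reads off its minimal discriminant $\Delta_0$ and compares it with the shape $\pm 2^{ap-6}3^{bp+3}d^2$ prescribed by Proposition~\ref{prop:multifreyconductor}, switching to the minimal-model variant described there whenever the Frey model is non-minimal at $2$. For every prime $\ell\mid d$ with $\ell>3$ one must have $v_\ell(\Delta_0)=2v_\ell(d)$; whenever this already fails for every $E_0$ of conductor in $\mathcal{L}_d$ we record a ``$-$'' for that $d$, since no solution with $C$ supported in $\{2,3\}$ can exist. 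Otherwise, matching $v_2(\Delta_0)$ and $v_3(\Delta_0)$ against the congruences imposed by the unknown $a,b,p$ forces $p$ to divide specific small non-zero integers (essentially $v_2(\Delta_0)+6$ and $v_3(\Delta_0)-3$, up to the non-minimal correction), which yields an explicit upper bound on $p$ for each $E_0$.

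Taking the maximum of these bounds over all $E_0$ with conductor in $\mathcal{L}_d$ then produces the $d$-th entry of Table~\ref{table:Bennett}. The main obstacle I expect is organizational rather than conceptual: one has to separate carefully the minimal and non-minimal cases at $2$ listed in Proposition~\ref{prop:multifreyconductor}, enumerate every admissible combination of $v_2(N_E)$ and $v_3(N_E)$ allowed there, and make sure that no curve of allowed conductor is missed in the database lookup. Completeness of Cremona's tables up to conductor $500000$ is exactly what makes this search effective, and for each of the listed values of $d$ the computation is short enough to be carried out on the fly and reproduced from the scripts posted with the paper.
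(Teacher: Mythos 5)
Your overall strategy is the same as the paper's: use Proposition~\ref{prop:multifreyconductor} to enumerate the finitely many conductors that $\widetilde{E_{A,B}}$ could have, search Cremona's tables for rational curves with those conductors, and extract a bound on $p$ from the invariants of any matching curve. Two points of divergence deserve comment, one of which is a genuine gap.

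The gap is your assertion that the list $\mathcal{L}_d$ of allowed conductors is ``comfortably below the LMFDB cutoff of $500000$'' for every $d$ in the range. This fails for $d=19$: the additive prime $19$ forces $19^2\mid N_E$, and the $2$- and $3$-parts can reach $2^6\cdot 3^3$, giving the upper bound $N_E\le 2^6\cdot 3^3\cdot 19^2 = 623808 > 500000$. The paper explicitly flags this failure and handles $d=19$ by a different (and cheaper) argument: since $d=19\equiv 3\pmod 8$ the prime $2$ is inert in $\Q(\sqrt{-19})$, and since $-19$ is a non-residue mod $3$ the prime $3$ is also inert; by \cite[Lemmas 5.3, 5.4]{PT} an inert prime cannot divide $C$, so $C$ cannot be supported in $\{2,3\}$ at all and the entry is ``$-$'' without any table lookup. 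Your plan as stated would simply stall at $d=19$.

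The second difference is methodological and harmless: you propose to match $v_2(\Delta_0)$ and $v_3(\Delta_0)$ against the valuations of $-2^{ap\pm 6}3^{bp+3}d^2$ to obtain divisibility constraints on $p$, whereas the paper matches the pair $(c_4,c_6)$ (which must be negative, with $288\mid c_4$ and $3456\mid c_6$) and then \emph{recovers} $(A,B)$ exactly from $c_4=-2^43^2dB^2$, $c_6=-2^63^3dA$; factorising $A^2+dB^6$ then determines $p$ outright. The $c$-invariant filter is strictly sharper (it detects sign and fine divisibility, which the discriminant valuation cannot), but your weaker filter would still yield a finite bound on $p$ per curve, so this is an implementation choice rather than an error. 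Do note that your parenthetical ``essentially $v_2(\Delta_0)+6$'' has the sign reversed for the minimal case, where the relation is $ap = v_2(\Delta_0)-6-2v_2(d)$; the $+6$ shift only appears in the non-minimal-at-$2$ case.
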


\begin{proof}
  Let us explain the algorithm in one example: let $d=2$. Then
  Proposition~\ref{prop:multifreyconductor} gives that
  $N_E = 2^\alpha 3^\beta$ with $\alpha \in \{2,3,4,7\}$ and
  $\beta \in \{2,3\}$. Now we do a search for all the elliptic curves
  of conductor $2^\alpha 3^\beta$ which satisfy that its invariants
  ($c_4$, $c_6$, $\Delta$) are compatible with the invariants of our
  Frey curve, for instance, that are negative numbers (see equation
  ~(\ref{eq:c-invariants})) and that $288\mid c_4$ and $3456\mid
  c_6$. There is a single curve matching these three requirements
  labeled \lmfdbec{1152}{r}{2} in the LMFDB. Formula
  ~(\ref{eq:c-invariants}) allows us to recover the value of $(x,y)$,
  namely $(5,1)$. Then, factorizing $x^2+2y^6$ gives that $p$ must be
  $3$.
  
  A similar computation was done to each other value of $d$, giving
  the finite list of possibilities for the value of $p$ presented in
  the table. We want to stress that the described method fails for
  $d=19$ as it involves computing elliptic curves of conductor
  $623808$. There are two possibilities to try to follow the same
  approach: one is to compute such set of elliptic curves using
  Cremona's algorithm (this was done for us by Professor John Cremona),
  while the other is to use the tables of elliptic curves with bad reduction
  at a small set of primes as described in \cite{1605.06079} (whose
  database is available at
  \url{https://bmatschke.github.io/solving-classical-diophantine-equations/}). However,
  in this particular case, there is no need to perform such a
  computation. The reason is that since $2$ is inert, $2\nmid C$ (by
  \cite[Lemma 5.3]{PT}) and the same happens for the prime $3$ (by
  \cite[Lemma 5.4]{PT}).
\end{proof}

\begin{remark}
  The last argument actually proves that there is no solution for all
  values of $d$ which satisfy $d \not \equiv 7 \pmod 8$ and
  $d \not \equiv 2 \pmod 3$.
\end{remark}

\section{Application to (\ref{eq:mainequation}) with $1 \le d \le 20$ and square-free}\label{section:aplplications}

In this section, we apply the previous procedures to study solutions
of (\ref{eq:mainequation}) for $1 \le d \le 20$, square-free
(and show what are its limitations). The case $d=1$ was considered in
\cite{MR2966716}, the case $d=2$ in \cite{PT} and the case $d=3$ in
\cite{Angelos}, so we restrict to values $d \ge 5$.

To discard all CM newforms, we need to understand which solutions provide
elliptic curves with CM.
\begin{lemma}\label{lemma:CMsolutions}
Let $d$ be square-free and $(A,B,C)$ be a solution of
~(\ref{eq:mainequation}). Then the curve $E_{A,B}$ has CM if and only if
the solution is trivial or	$(A,B,C,d,p) =(\pm 5,\pm 1,3,2,3)$, with complex multiplication by $\Z[\sqrt{-2}]$.
\end{lemma}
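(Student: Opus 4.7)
The plan is to compute $j(E_{A,B})$ explicitly in terms of a suitable parameter, then exploit the fact that CM forces $j$ to be rational and to satisfy a very restrictive polynomial equation.

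For the ``if'' direction, one verifies by direct computation that trivial solutions and $(\pm 5,\pm 1,3,2,3)$ yield CM $j$-invariants (namely $j\in\{0,54000\}$ for trivial solutions, and $j=8000$ for the exceptional one). For the converse, I would set $\alpha = A + B^3\sqrt{-d}$ (so $\alpha\bar\alpha = C^p$) and $t = \bar\alpha/\alpha \in K$. A direct Weierstrass calculation, together with the identity $\sqrt{-d}\,(9\bar\alpha-\alpha)/2 = 5dB^3 + 4A\sqrt{-d}$, gives
\begin{equation*}
  j(E_{A,B}) = \frac{27(1-t)(9t-1)^3}{t}.
\end{equation*}
If $E_{A,B}$ has CM, then $\Q(j)/\Q$ is abelian with Galois group the CM class group, on which complex conjugation acts by inversion. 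Since $j\in K$ imaginary quadratic forces $[\Q(j):\Q]\leq 2$, the class group has order at most $2$, whence conjugation is trivial and $\Q(j)\subseteq K\cap\RR=\Q$; so $j(E_{A,B})\in\Q$.

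Next, I would use $t\bar t = 1$ to write the Galois action of $K/\Q$ as $t \mapsto 1/t$, so $\sigma j = j$ translates into $j(t)=j(1/t)$; after clearing denominators this becomes
\begin{equation*}
  (t-1)(t+1)\bigl(729\,t^4 - 972\,t^3 + 998\,t^2 - 972\,t + 729\bigr) = 0.
\end{equation*}
The palindromic quartic factor reduces under $s = t+t^{-1}$ to $729\,s^2 - 972\,s - 460 = 0$ with roots $s\in\{46/27,-10/27\}$, so
\begin{equation*}
  t \in \bigl\{\,1,\;-1,\;\tfrac{23\pm 10\sqrt{-2}}{27},\;\tfrac{-5\pm 8\sqrt{-11}}{27}\,\bigr\}.
\end{equation*}

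Finally I would recover the admissible $(A,B,C,p)$ for each root. The cases $t=\pm1$ give $B=0$ and $A=0$, both trivial under primitivity. For $t = (23\pm 10\sqrt{-2})/27$ one is forced to have $d=2$; using $23-10\sqrt{-2} = (5-\sqrt{-2})^2$ together with the factorization $3 = (1+\sqrt{-2})(1-\sqrt{-2})$ in $\Z[\sqrt{-2}]$, a prime-by-prime valuation analysis of the equation $27\,C^p = (5-\sqrt{-2})^2\alpha^2$ yields $\alpha = \pm(5+\sqrt{-2})\cdot 3^{(p-3)/2}\,m^p$ with $C = 3m^2$, and the primitivity condition $\gcd(A,B,C)=1$ then forces $p=3$ and $m=1$, giving $(A,B,C,p)=(\pm 5,\pm 1,3,3)$. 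For $t=(-5\pm 8\sqrt{-11})/27$ one has $d=11$; equating real and imaginary parts of $\bar\alpha = t\alpha$ forces $A=\pm 11B^3/4$, whence $B=2B'$, $A=\pm 22(B')^3$, and $C^p = 2^2\cdot 3^3\cdot 11\cdot(B')^6$, so $2$ divides each of $A,B,C$, contradicting primitivity. The main obstacle is the valuation bookkeeping in $\Z[\sqrt{-2}]$ required in the $d=2$ case to derive the explicit form of $\alpha$ and to confirm that primitivity pins down $p=3$ and $B=\pm 1$.
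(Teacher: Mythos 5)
Your proof is correct, and it reorganizes the paper's computation in a pleasant way. The paper computes $j(E_{A,B})$ directly from the Weierstrass model, observes as you do that CM forces $j\in\Q$ (since $\Q(j)$ is always a real field and is contained in the imaginary quadratic $K$), and then simply expands and factors the $\sqrt{-d}$-part of $j$ as $864AB^3(2A^2-25dB^6)(16A^2-11dB^6)/C^{3p}$, reading off the cases $A=0$, $B=0$, $2A^2=25dB^6$, $16A^2=11dB^6$. Your substitution $t=\bar\alpha/\alpha$ packages exactly the same data: with $s=t+t^{-1}=(2A^2-2dB^6)/C^p$, the root $s=46/27$ is $2A^2=25dB^6$ and $s=-10/27$ is $16A^2=11dB^6$, so your palindromic quartic is a change of variables of the paper's factorization. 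What your route buys is that the symmetry $t\mapsto 1/t$ (i.e.\ $|t|=1$) and the resulting palindromic structure are visible from the start, and it produces the CM $j$-values $0$, $54000$, $8000$ cleanly. What it costs is length.

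One small remark on the $d=2$ case: you flag the valuation bookkeeping in $\Z[\sqrt{-2}]$ as the main obstacle, but it is avoidable. Equating real and imaginary parts in $27\bar\alpha=(23\mp 10\sqrt{-2})\alpha$ gives directly $A=\mp 5B^3$ (you did the analogous step for $d=11$), whence $C^p=A^2+2B^6=27B^6$. Primitivity gives $\gcd(A,B)=1$ (any common prime would divide $C$), and since $B\mid A$ this forces $B=\pm1$, hence $C^p=27$ and $(C,p)=(3,3)$. No factorization in $\Z[\sqrt{-2}]$ is needed, and this is essentially the paper's argument for that branch.
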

\begin{proof}
  The $j$-invariant
  $j(E_{A,B})=2^43^3\sqrt{-d}B^3\cdot\frac{(4A-5B^3\sqrt{-d})^3}{C^p(A+B^3\sqrt{-d})^2}\in\Q({\sqrt{-d}})$. If
  $E$ has CM by an order $\Om \subset K$ its $j$-invariant lies in the
  Hilbert class field of $\Om$ (an extension of $K$). The extension
  $\Q(j(\Om))$ is quadratic imaginary if and only if it is the
  rational field. The imaginary part of $j(E_{A,B})$ factors as
  \begin{equation}\label{eq:j-invariant}
    864AB^3\cdot\frac{(2A^2-25dB^6)(16A^2-11dB^6)}{C^{3p}}.
  \end{equation}
  It vanishes if and only if $A=0$ (providing a non-primitive
  solutions), $B=0$ (trivial solutions) or one of the last two terms
  vanishes. Since $d$ is square-free and our solution is primitive,
  this can only occur when $d=2$, $A=\pm 5$ and $B= \pm 1$,
  corresponding to the point $(\pm 5, \pm 1, 3)$ for $p=3$.
\end{proof}

\subsection{Explicit Ellenberg's bounds}
\label{section:ellenberg}
%
%
%

\begin{prop}
\label{thm:ellbounds}
The bound $N_d$ in Ellenberg's result can be taken as: $N_5= 1033$,
$N_6 = 1289$, $N_7=337$, $N_{11}=557$, $N_{13}=3491$, $N_{15}=743$ and $N_{19}=1031$.
\end{prop}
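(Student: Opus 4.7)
The proof is a direct computational verification of Ellenberg's criterion. For each $d \in \{5,6,7,11,13,15,19\}$, the strategy is to evaluate the $p$-new quantity $(a_1,L_\chi)_{p^2}^{p\text{-new}}$ from equation~(\ref{eq:p-new}) for every prime $p$ up to (and slightly beyond) the proposed bound $N_d$, and then to take $N_d$ to be the largest prime at which this quantity either vanishes or cannot be certified non-zero within the working precision.

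First I would fix the quadratic character $\chi$ associated to $K = \Q(\sqrt{-d})$: since $d$ is squarefree, its conductor is $\ff = d$ when $d \equiv 3 \pmod 4$ and $\ff = 4d$ otherwise, giving $\ff \in \{20,24,7,11,52,15,19\}$ for the seven values of $d$ in the statement. Next, for each prime $p$ in the range of interest I would compute the sum $(a_1,L_\chi)_{p^2}^{p\text{-new}}$ by means of the Pari/GP script referenced in the paper. Rather than constructing a Petersson-orthogonal basis of $S_2(\Gamma_0(p^2))$ directly (which is infeasible when $p^2$ is of order $10^7$), the evaluation proceeds via the Petersson trace formula combined with an approximate functional equation for $L(f \otimes \chi,1)$, reducing the whole quantity to a finite sum of Kloosterman terms weighted by Bessel functions. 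Rigorous non-vanishing is then certified by propagating the truncation and precision errors through the combination in equation~(\ref{eq:p-new}).

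The main obstacle is the size of $N_{13} = 3491$: for primes $p$ close to this value, the $p$-new sum arises from a delicate cancellation between terms of much larger magnitude, so both the Kloosterman truncation length and the floating-point precision must be raised substantially relative to the smaller cases. This is in fact what sets the bound: for a handful of sporadic primes $p \le 3491$ the numerical value is either zero or too small to certify non-vanishing, and $N_{13}$ is taken as the largest such prime. The remaining six cases, with $N_d$ between $337$ and $1289$, are comparatively routine and can be settled with standard working precision; in each case the claimed $N_d$ is determined purely empirically as the record holder in the range that was swept.
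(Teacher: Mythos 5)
Your proposal has a fundamental logical gap and also misunderstands the structure of the computation. You propose to evaluate $(a_1,L_\chi)_{p^2}^{p\text{-new}}$ for each prime $p$ in a finite sweep up to (and "slightly beyond") the claimed bound, certify non-vanishing numerically, and declare $N_d$ to be the largest prime at which certification fails. But this establishes nothing for the infinitely many primes $p > N_d$: a finite numerical sweep, no matter how carefully the errors are controlled, cannot by itself imply non-vanishing for all larger $p$, which is what the statement requires. The paper never computes $(a_1,L_\chi)_{p^2}^{p\text{-new}}$ at all. Instead, it assembles an explicit \emph{lower bound} for this quantity, built from the main term $4\pi e^{-2\pi^2/p^2 q\log p}$ of Ellenberg's asymptotic formula (\cite[Theorem 1]{MR2176151}) minus explicit upper bounds for the error terms $E_1,\dots,E^{(3)}$ and for the oldform contributions $(a_p,L_\chi)_p$ and $(a_1,L_\chi)_p$ (via \cite[Theorem 3.13]{MR2075481}). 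Crucially, it then observes that this lower bound $F(p,q) - \tfrac{1}{p^2-1}F_2(p,q,p) - \tfrac{p}{p^2-1}F_2(p,q,1)$ is a \emph{monotone increasing} function of $p$, so it suffices to locate the first prime at which it becomes positive; positivity then propagates automatically to all larger primes. That monotonicity is the load-bearing idea your argument is missing, and without it (or some substitute asymptotic control) the method does not close.

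Two further points. First, you describe the numerical side as Petersson trace formula plus approximate functional equation, i.e.\ an exact evaluation of the spectral average with rigorous error propagation; this would be vastly more expensive (and in fact infeasible at $p \sim 3491$, $p^2 \sim 10^7$) than evaluating the closed-form analytic lower bound, which is what the paper's Pari/GP script actually does and which is cheap for any single $p$. The paper's framing of $N_{13}$ being the hardest case is simply that the monotone lower bound first becomes positive later, not that there is delicate cancellation to certify. Second, your conductor list $\{20,24,7,11,52,15,19\}$ does not match the paper's $\{20,24,7,44,52,15,19\}$ in the entry for $d=11$.
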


\begin{proof}
  Let $\chi$ the quadratic character attached to $K/\QQ$ and $q$ its
  conductor. Let $N=p^2$, with $N \geq 400$, and let
  $\sigma=q^2/2\pi $. Then Ellenberg \cite[Theorem 1]{MR2176151} give
  us the following formula:
\begin{equation}
\label{eq:Ell05Th1}
(a_1,L_{\chi})_{p^2} = 4\pi e^{-2\pi/\sigma N \log(N)} - E^{(3)} +E_3 -E_2-E_1+ (a_1, B(\sigma N \log(N))),
\end{equation}
where bounds for $(a_1, B(\sigma N \log(N))), E_1, E_2, E_3$ and
$E^{(3)}$ are specified in \cite[Theorem 1]{MR2176151}. Denote the
previous bounds as $\text{bound1(p,q)}$, $E_1(p,q)$, $E_2(p,q)$,
$E_3(p,q)$ and $E_5(p,q)$ respectively.  In order to obtain a simpler
formula for the bound of $E^{(3)}$ (following Ellenberg's notation)
one splits the sum depending on whether $c \leq p^4$ or $c>p^4$. In
this way (as in \cite[Lemma 8]{MR2561200}), we obtain:
\begin{equation}
  \label{eq:E(3)}
  |E^{(3)}| \leq 16 \pi^3 \left( \frac{12 \phi(q)\log^2(p)}{\pi p^2} + \frac{q^2\log(p^2)}{4\pi p} \left( \zeta^2\left(\frac{3}{2}\right) - \sum\limits_{k=1}^{p^2} \frac{\tau(k)}{k^{3/2}} \right) \right),
\end{equation}
where $\phi$ is Euler's function and $\tau(k)=\sum_{d|k}1$.
Let $E_4(p,q)$ denote the right hand side. Let
$F(p,q) := 4\pi e^{-2\pi^2/p^2q\log(p)} - E_4(p,q) - E_3(p,q) -
E_2(p,q) -E_1(p,q) - \text{bound1}(p,q)$.  Then from Equations
\eqref{eq:p-new}, \eqref{eq:Ell05Th1}, \eqref{eq:E(3)} we have:
\begin{equation}
  \label{eq:p-newineq}
  \left(a_1, L_{\chi}\right)_{p^2}^{p-\text{new}} \geq  F(p,q) - \frac{1}{p^2-1}(a_p,L_{\chi})_p - \frac{p}{p^2-1}(a_1,L_{\chi})_p.
\end{equation}
In order to bound $(a_p,L_{\chi})_p$ and $(a_1,L_{\chi})_p$ we use
Ellenberg's bound \cite[Theorem 3.13]{MR2075481}, obtaining a function
$F_2(p,q,m)$ such that $(a_m,L_{\chi})_p \leq F_2(p,q,m)$. Therefore,
using this bound and the Inequality \eqref{eq:p-newineq} we have that:
\begin{equation}
  \label{eq:p-newineq2}
  \left(a_1, L_{\chi}\right)_{p^2}^{p-\text{new}} \geq  F(p,q) - \frac{1}{p^2-1}F_2(p,q,p) - \frac{p}{p^2-1}F_2(p,q,1).
\end{equation}
Note that, since $4\pi e^{-2\pi^2/p^2q\log(p)}$, $- E_4(p,q)$,
$- E_3(p,q)$, $- E_2(p,q)$, $-E_1(p,q)$ and $- \text{bound1}(p,q)$ are
increasing functions (in $p$), the function $F(p,q)$ is
increasing. One can also deduce that $-\frac{1}{p^2-1}F_2(p,q,p)$ and
$-\frac{p}{p^2-1}F_2(p,q,1)$ are also increasing functions, hence
$\left(a_1, L_{\chi}\right)_{p^2}^{p-\text{new}}$ is increasing. Then,
in order to assure that
$\left(a_1, L_{\chi}\right)_{p^2}^{p-\text{new}}$ is non-zero, it is
enough to get the first prime $p$ where the right hand side of
(\ref{eq:p-newineq2}) is positive. We implemented this code, following
the above notation, in Pari/GP, the resulting file being labeled
``ellenberg.gp'' (available at \url{http://sweet.ua.pt/apacetti/research}).

The values $d=5,6,7,11,13,15,19$ correspond to characters of
conductors $20,24,7,44,52,15,19$ respectively. Running our script
(EllenbergBound(p,q)), we obtain that the first prime $p$ for which
the bound is positive for
$d=5,6,7,11,13,15,19$ equals $1033$, $1289$, $337$, $557$, $3491$,
$743$ and $1031$ respectively.
\end{proof}

\begin{remark}
  \label{thm:ellboundsimp}
  In some instances the previous bound can be improved for each value
  of $d$ by a finite computation (see \cite[Proposition
  3.9]{MR2075481}) via computing for each $p < N_d$ whether there is a
  newform $f$ satisfying:
  \begin{itemize}
  \item $f \in S_2(\Gamma_0(dp^2))$ with $w_pf=f$ and $w_df=-f$ or
    
  \item $f \in S_2(\Gamma_0(d'p^2))$ for $d'$ a proper divisor of $d$ with $w_pf=f$,
  \end{itemize}
  for which $L(f,\chi) \neq 0$. In \cite[Proposition 5.4]{Angelos} a
  script to perform such computation is given. The problem with this
  approach is that it is almost impossible to compute the space of
  newforms for large values of $N$. In particular, it will definitely
  not work for values of $d$ different from $7$ in the previous list
  (and we did not try to run it for $d=7$ as it will probably not work either).
\end{remark}

\subsection{The case $d=5$:} Let $K= \Q(\sqrt{-5})$ and let
$\id{p}_2 = \langle 2,1+\sqrt{-5}\rangle$ so that $2 =
\id{p}_2^2$. Let $\id{p}_3 = \langle 3,1+\sqrt{-5}\rangle$ so that
$3 = \id{p}_3 \overline{\id{p}_3}$.  By the results of \cite{PT}, the
following holds.
\begin{itemize}
\item The curve $E_{A,B}$ reduction type IV at $\id{p}_2$, conductor
  exponent $2$ and $v_{\id{p}_2}(\Disc(E_{A,B}))=4$. 
  
\item If $3 \mid AB$ then the curve $E_{A,B}$ has reduction type II or
  III at both $\id{p}_3$ and $\overline{\id{p}_3}$.
  
\item If $3 \nmid AB$ and $p \ge 5$ then we can assume that $v_{\id{p}_3}(N) = 1$ while $v_{\overline{\id{p}_3}}(N)=2$.
  
\item At the prime $\id{p}_5 = \sqrt{-5}$ the curve has reduction type IV$^*$, with $v_{\id{p}_5}(\Delta) = 8$. In particular, $e = 3$.
 
\end{itemize}
In all cases, the curve is not the quadratic twist of a curve with
good reduction at primes dividing $2, 3, 5$. Following the notation of
\cite{PT}, $Q_{+-} = \{5\}$ while the other sets are empty. In
particular, $\varepsilon_5$ is the character of order $4$ (and
conductor $5$), while $\varepsilon_2 = \delta_{-1}$, hence
$\varepsilon$ is a character of order $4$ and conductor $20$. By
Theorem 6.3 of loc. cit., we need to compute the spaces
$S_2(\Gamma_0(2^4\cdot 3^2 \cdot 5^2),\varepsilon)$ and
$S_2(\Gamma_0(2^4\cdot 3^3 \cdot 5^2),\varepsilon)$.

\begin{remark}
There are two possible choices for
the character $\chi$ needed to twist the representation
$\rho_{E_{A,B}}$ so that is descends to $G_\Q$, due to the fact that
$\Cl(\Q(\sqrt{-5})) = 2$. Both choices differ by the quadratic twist of
the character attached to the class group (corresponding to
the extension $K(\sqrt{-1})/K$).
\label{remark:d5}
\end{remark}

\noindent $\bullet$ The space
$S_2(\Gamma_0(2^4\cdot 3^2 \cdot 5^2),\varepsilon)$ has $15$ newform
Galois conjugate classes, $7$ of them with CM (which cannot correspond
to solutions). Mazur's trick (as explained in Section
\ref{section:marzurtrick}), proves that all other forms cannot
correspond to solutions if $p>5$ but for four forms, corresponding to
the forms $8, 11, 12$ and $13$ in Magma's order (their coefficient
field is a degree $8$ extension of $\Q$, containing the field of
fourth roots of unity).  By Eichler-Shimura we know that all such
forms have attached a geometric object (providing a $\GL_2$-type
representation), but a priori the dimension of its building block
could be $2$ (see \cite{MR2448720}). A first sanity check (when
possible) is to run Quer's algorithm (implemented in Magma
\cite{MR1484478}) to make sure that the building block has dimension
one, hence there is an elliptic curve attached to our $8$-dimensional
abelian variety. Also, Quer's algorithm gives the field of definition
of the building block. The four forms have inner twists, and have a
building block of dimension one (this can be checked with the
``BrauerClass'' routine and with  the ``DegreeMap'' one in Magma) define
over the quadratic field $\Q(\sqrt{-5})$, hence we are led to compute
such curves.

Before giving equations for the elliptic curves attached to our
modular forms, let us make an important observation. We know that we
cannot have a congruence between two elliptic curves with different
local types (at least if $p \ge 5$), hence we could check whether the
modular forms come as twist of forms of smaller level. The forms
$f_8, f_{12}$ and $f_{13}$ are quadratic twists of forms whose level
have $3$ to the first power while the form $f_{11}$ is a quadratic
twist of a form with good reduction at $3$, hence it cannot come from
a solution! 

We searched for elliptic curves over $\Q(\sqrt{-5})$ with good
reduction outside $\{\id{p}_2,\id{p}_3,\overline{\id{p}_3},\id{p}_5\}$
using Magma's implementation (while running the routine it is useful
to have a bound on the elliptic curve's conductor) with the command
``EllipticCurveWithGoodReductionSearch'', getting $81952$ elliptic
curves. Using a few $a_p$'s of the newforms (using (\ref{eq:BC}) and
twisting by $\chi^{-1}$) we can discard all curves but the ones
related to the previous newforms. Note that since $\Q(\sqrt{-5})$ does
not have class number one, there is no minimal equation. By
Remark~\ref{remark:d5} there are (at least) two elliptic curves
attached to the modular forms $f_i$ (which are quadratic twists of
each other by $K(\sqrt{-1})/K$), and their complex conjugate (which
are isogenous to their twist by $\sqrt{-3}$) hence we will only give
an equation for one of these four curves.

\begin{remark}
  Each curve $E$ that is a candidate to match our modular form has the
  property that it is isogenous to the quadratic twist by $\sqrt{-3}$
  of its Galois conjugate. In particular, they are $\Q$-curves, and
  (after twisting) corresponds to a modular newform in
  $S_2(\Gamma_0(2^4\cdot 3^2 \cdot 5^2),\varepsilon)$. Computing a few
  $a_p$'s allows to certify that the curve is really the building
  block of the newform we started with.
\end{remark}

The curves are the following:
\begin{equation}
  \label{eq:d5-8}
E_8: y^2 = x^3 + (\sqrt{-5}+1)x^2 + (294 \sqrt{-5}+132)x + -1206\sqrt{-5} -6282.
\end{equation}
\begin{equation}
  \label{eq:d5-11}
  E_{11}: y^2 = x^3 + (\sqrt{-5}+1)x^2 + (-186 \sqrt{-5}-168)x + 1334\sqrt{-5} -382.
\end{equation}
The set of curves attached to the form $f_{12}$ is more interesting,
as the curves have an extra isogeny of degree $2$, hence we get eight
different curves (where the isogeny graph of each curve has 4
elements).
\begin{equation}
  \label{eq:d5-12}
  E_{12}:y^2 = x^3 + (-\sqrt{-5}-1)x^2 + (102 \sqrt{-5}-168)x + (810 \sqrt{-5}-162),
\end{equation}
and its two isogenous curve
\[
y^2 = x^3 + (-\sqrt{-5}-1)x^2 + (22 \sqrt{-5}-8)x + (-38 \sqrt{-5}-50).
  \]
  At last, the form $f_{13}$ is related to the elliptic curve
  \begin{equation}
    \label{eq:d5-13}
  E_{13}:  y^2 = x^3 + (\sqrt{-5}+1)x^2 + (414 \sqrt{-5}-2268)x + (-9666 \sqrt{-5}+33318).
  \end{equation}
  Here are some trivial observations: the curve $E_8$ is the quadratic
  twist of a curve with good reduction at $2$, hence it cannot be
  congruent to a curve $E_{A,B}$ (for $p>2$). The curve $E_{11}$ is
  the quadratic twist of a curve with good reduction at $\id{p}_3$
  (after twisting by $3$) hence can also be discarded (we already knew
  this fact). The curve $E_{12}$ is the quadratic twist of a curve
  with good reduction at $\id{p}_5$ hence can also be discarded. So we
  only need to discard the curve $E_{13}$. An easy computation (using
  Sage) shows that the curve $E_{13}$ does not have a $3$-torsion
  point (neither does its $3$-isogenous curve), hence we can use the
  strategy described in Theorem~\ref{thm:Samir} and
  Remark~\ref{remark:siksek}. Concretely,
  $a_{\id{p}_{43}}(E_{13}) = 1$ for both primes in $\Q(\sqrt{-5})$
  dividing $43$. In particular,
  $a_{\id{p}_{43}}(E_{13}) \not \equiv 43+1 \pmod 3$, hence $E_{13}$
  cannot come from a solution if $\ell \ge 4\sqrt{43} = 26.22$...

\vspace{5pt}
  
\noindent $\bullet$ The space
$S_2(\Gamma_0(2^4\cdot 3^3 \cdot 5^2),\varepsilon)$ has $15$ newform
Galois conjugate classes, 6 of which have CM (hence we can discard
them). For the remaining ones, Mazur's trick discard all of them when
$p > 5$ except four forms corresponding to the places $7,8,9$ and $10$
in Magma's output. It is important to remark that twisting by
$\sqrt{-3}$ preserves the space, hence we can actually group the forms
in pairs. The twist of the seventh form is the ninth one, while the
twist of the eighth form is the tenth. Proceeding as before, we compute equations for
their building blocks and obtain
\begin{equation}
  \label{eq:d5II-7}
  E_7:y^2 = x^3 + (-24\sqrt{-5}-60)x + (112 \sqrt{-5}+136),
\end{equation}
and
\begin{equation}
  \label{eq:d5II-8}
  E_8:y^2 + (\sqrt{-5}+1)xy = x^3 + (\sqrt{-5}+1)x^2 + (-45\sqrt{-5}-60)x + (190\sqrt{-5}+100).
\end{equation}
Note that the first curve has good reduction at the prime dividing
$5$, hence cannot be attached to a solution. The second one has
conductor valuation $3$ at both primes dividing $3$, which only occurs
for solutions satisfying $3 \mid AB$.  Once again,
$a_{\id{p}_7}(E_8) = 1$ for both primes dividing $7$, hence this
cannot correspond to a solution if $\ell \ge 4\sqrt{7} = 10.58...$.
%
%
%
%
%
Together with Remark~\ref{thm:ellboundsimp}, we get the following result.

\begin{thm}
  The equation $x^2+5y^5 = z^p$ has no non-trivial solution if
$p \ge 1033$.
\end{thm}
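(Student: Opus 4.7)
The plan is, starting from a hypothetical non-trivial primitive solution $(A,B,C)$ with exponent $p\ge 1033$, to attach the Frey $\Q$-curve $E_{A,B}$ of \eqref{eq:frey-curve} and to exploit the machinery built up in the earlier sections. By Proposition~\ref{thm:ellbounds}, for such $p$ the projective residual image of $\rho_{E_{A,B},p}$ is surjective, hence the representation is absolutely irreducible. Combining [PT, Theorem~6.3] with the local analysis at $\id{p}_2$, $\id{p}_3$, $\overline{\id{p}_3}$, $\id{p}_5$ recalled above, an appropriate twist of $\rho_{E_{A,B},p}$ descends to $\Q$ and, by Ribet's lowering-the-level theorem, is congruent modulo a prime above $p$ to a newform $g$ in either $S_2(\Gamma_0(2^4\cdot 3^2\cdot 5^2),\varepsilon)$ or $S_2(\Gamma_0(2^4\cdot 3^3\cdot 5^2),\varepsilon)$, according as $3\nmid AB$ or $3\mid AB$, with $\varepsilon$ the quartic Nebentypus of conductor $20$. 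The separate case where $C$ is supported only on $\{2,3\}$ (where Ellenberg's multiplicative-prime hypothesis can fail) is settled at the outset by Theorem~\ref{thm:Bennet}, which forces $p\le 2$ for $d=5$.

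Within each target space I discard first the CM newforms via Lemma~\ref{lemma:CMsolutions}, and then run Mazur's trick of Section~\ref{section:marzurtrick} using a modest list of split or inert auxiliary primes $\ell$. As predicted there, this eliminates every newform whose base-change coefficient field is too large; explicit computation leaves exactly four Galois-conjugacy classes of survivors in each space.

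The remaining work is to rule these out. Quer's algorithm verifies that each survivor carries a one-dimensional building block defined over $K=\Q(\sqrt{-5})$, and a Magma search over elliptic curves with the correct bad reduction pins down an explicit model. After collapsing by the $K(\sqrt{-1})/K$ twist of Remark~\ref{remark:d5} (and, in the second space, by the $\sqrt{-3}$ twist that preserves the space), I obtain four curves $E_8,E_{11},E_{12},E_{13}$ in the first case and two curves $E_7,E_8$ in the second. For three of the four in the first space and for $E_7$ in the second, the explicit model is a quadratic twist of a curve with good reduction at some prime where $E_{A,B}$ must have bad reduction; Proposition~\ref{prop:localtypeinvariance} then forbids the congruence for $p\ge 5$. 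The main obstacle is the remaining pair $E_{13}$ (first space) and $E_8$ (second space), where the local types at $\id{p}_2$, $\id{p}_3$, $\id{p}_5$ genuinely match those of $E_{A,B}$ and no local invariant discriminates them. Here I exploit the fact that the Frey curve always carries a $K$-rational $3$-torsion point, whereas neither survivor (nor its $3$-isogenous neighbour) admits one; by Theorem~\ref{thm:Samir} combined with Remark~\ref{remark:siksek}, exhibiting a small prime $\id{q}$ with $a_{\id{q}}\not\equiv 1+N(\id{q})\pmod 3$ forces the congruence to fail once $p>4\sqrt{N(\id{q})}$. The resulting bounds ($4\sqrt{43}$ and $4\sqrt{7}$ in the two cases) lie far below $1033$, completing the argument.
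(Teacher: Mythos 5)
Your proposal is correct and follows essentially the same route as the paper: apply Ellenberg's bound ($N_5=1033$) after dispatching the $C$-supported-on-$\{2,3\}$ case via Theorem~\ref{thm:Bennet}, lower the level into the two quartic-Nebentypus spaces, eliminate CM forms, run Mazur's trick, match the surviving forms to explicit elliptic curves over $\Q(\sqrt{-5})$, discard most by local-type mismatches, and finish $E_{13}$ and $E_8$ by the absence of a $3$-torsion structure using Theorem~\ref{thm:Samir} and Remark~\ref{remark:siksek} with the bounds $4\sqrt{43}$ and $4\sqrt{7}$. If anything you are slightly more explicit than the paper in flagging up front the need for Theorem~\ref{thm:Bennet} (which the paper uses only implicitly in this subsection); the only tiny imprecision is describing $E_7$ in the second space as a quadratic twist of a curve with good reduction at $\id{p}_5$ --- it simply has good reduction there itself --- but the conclusion is unaffected.
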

    
\subsection{The case $d=6$:} this case was solved in \cite[Theorem 7.2]{PT}.

\subsection{The case $d=7$:} This case turns to be very interesting,
since the Lebesgue-Nagell equation is hard to solve. In
\cite{MR1259344} (Section 6) Cohn conjectured that all possible
solutions of
\begin{equation}
  \label{eq:Lebesgue-Nagell}
  x^2+7=z^n
\end{equation}
with $n \ge 3$ have $\abs{x} \in \{1, 3, 5, 11, 181\}$. The conjecture
was studied in \cite{MR1980642} and completely solved in
\cite{MR2196761}.
Going back to the general equation $x^2+7y^6=z^p$, following the
notation of \cite{PT}, $Q_{-+} = \{7\}$ and all other ones are empty,
so $\varepsilon$ is the quadratic character of conductor
$21$. According to Theorem 6.3 in \cite{PT} we need to compute the spaces
$S_2(\Gamma_0(2^a\cdot3^b\cdot7^2),\varepsilon)$, where $a \in \{1, 3\}$ and
$b \in \{1, 3\}$.

\vspace{5pt}

\noindent $\bullet$ The space
$S_2(\Gamma_0(2\cdot 3 \cdot 7^2),\varepsilon)$ has two newforms
Galois orbits. The second one can be discarded using Mazur's trick for
$p>7$, but it does not discard the first one. It corresponds to the
elliptic curve:
\begin{equation}
  \label{eq:d=7I}
  E_1:y^2 + xy + \frac{1+\sqrt{-7}}{2}y = x^3 - x^2 + (950\sqrt{-7}-46)x
  + \frac{7285\sqrt{-7}+200449}{2}.  
\end{equation}
In fact, this curve corresponds to the solution
$181^2+7\cdot 1^6=2^{15}$ (notice that this comes from a solution of
~(\ref{eq:Lebesgue-Nagell})). The curve has discriminant
$-1\cdot (\sqrt{-7})^8 \cdot \id{p}_2^{13} \cdot
\overline{\id{p}_2}^{39}\cdot 3^3$ and a point of order $3$ over $K$,
namely, the point $(-5,-2\sqrt{-7}+319 )$. Let
$\id{p}_2 = \langle \frac{1+\sqrt{-7}}{2}\rangle$ and
$\overline{\id{p}_2}$ is Galois conjugate. The curve has
multiplicative reduction at both primes, hence we can apply
Theorem~\ref{thm:classicalsymplectic}. Recall that
$v_{\id{p}_2}(E_{A,B}) \equiv 8 \pmod p$ (and the same is true for
$\overline{\id{p}_2}$), hence both curves are symplectically
isomorphic for $\id{p}_2$ if and only of $\kro{2\cdot 13}{p}=1$ while
for $\overline{\id{p}_2}$ if and only if
$\kro{2 \cdot 13 \cdot 3}{p} = 1$. Clearly both cases cannot occur if
$\kro{3}{p}=-1$.


\vspace{5pt}

\noindent $\bullet$ The space
$S_2(\Gamma_0(2^2\cdot 3\cdot 7^2),\varepsilon)$ has four Galois
orbits of newforms, the first one having CM. The other three forms can
be discarded using Mazur's trick for $p > 7$.

\vspace{5pt}

\noindent $\bullet$ The space
$S_2(\Gamma_0(2^2\cdot 3^3\cdot 7^2),\varepsilon)$ has seven Galois
orbits of newforms. The first three of them have CM (hence can be
discarded), while the other four ones, can be discarded using Mazur's
trick for $p> 7$.

\vspace{5pt}

\noindent $\bullet$ The space
$S_2(\Gamma_0(2\cdot 3^3\cdot 7^2),\varepsilon)$ has six Galois orbits
of newforms. The last three ones can be discarded using Mazur's trick
for $p>7$, but not the first three ones. They correspond to the
following elliptic curves:
\begin{equation}
  \label{eq:d7II-1}
E_1: y^2 + xy + \frac{-1+\sqrt{-7}}{2} y = x^3 -x^2 + \frac{115\sqrt{-7}-91}{2} x + \frac{443\sqrt{-7}+529}{2}.  
\end{equation}
Its discriminant equals
$-1 \cdot (\sqrt{-7})^8\cdot \id{p}_2^9\cdot \overline{\id{p}_2}^3
\cdot 3^3$. It has the $3$ $K$-rational torsion point
$(-5,-2\sqrt{-7}+22)$ (hence cannot be discarded using the method of
Section~\ref{section:3torsion}).
\begin{equation}
  \label{eq:d7II-2}
E_2: y^2 + xy + \frac{-1+\sqrt{-7}}{2} y = x^3 - x^2 + \frac{31\sqrt{-7}-91}{2} x + \frac{121\sqrt{-7}-157}{2}.  
\end{equation}
Its discriminant equals $-1 \cdot (\sqrt{-7})^8\cdot \id{p}_2^6\cdot \overline{\id{p}_2}^2 \cdot 3^3$. It has the $3$ $K$-rational torsion point $(-5,-2\sqrt{-7}+8)$.
\begin{equation}
  \label{eq:d7II-3}
E_3: y^2 + xy + \frac{-1+\sqrt{-7}}{2} y = x^3  -x^2 - \left(\frac{53\sqrt{-7}+91}{2}\right) x - \left(\frac{201\sqrt{-7}+59}{2}\right).  
\end{equation}
Its discriminant equals
$-1 \cdot (\sqrt{-7})^8\cdot \id{p}_2^9\cdot \overline{\id{p}_2}^3
\cdot 3^3$. It has the $3$ $K$-rational torsion point
$(-5,-2\sqrt{-7}-6)$.

Note that in all three cases, we can use again the symplectic argument at both primes dividing $2$, getting that the curve $E_{A,B}$ cannot be symplectic isomorphic to any of the curves if $\kro{3}{p}=-1$.
In particular, we get the following result.
\begin{thm}
  Let $p\ge 337$ be a prime number such that $p \equiv 5,7 \pmod{12}$. Then
  there are no non-trivial solutions of the equation
	\[x^2+7y^6=z^p.\]
\end{thm}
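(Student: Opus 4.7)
The plan is to combine Ellenberg's bound, the multi-Frey result of Theorem~\ref{thm:Bennet}, Ribet's level-lowering, and a case-by-case analysis of the four spaces $S_2(\Gamma_0(2^a\cdot 3^b\cdot 7^2),\varepsilon)$ with $a,b\in\{1,3\}$, discarding the surviving candidate forms via the symplectic criterion.

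First, I would note that for $p\geq 337$, Proposition~\ref{thm:ellbounds} guarantees that the residual image of $\rho_{E_{A,B},\mathfrak{p}}$ is absolutely irreducible, provided $E_{A,B}$ admits a prime of multiplicative reduction. Since Theorem~\ref{thm:Bennet} bounds by $7$ the exponent $p$ in $x^2+7y^6=(2^a3^b)^p$, for $p\geq 337$ the conductor $C$ must have a prime divisor outside $\{2,3\}$; together with the shape of the discriminant of $E_{A,B}$, this provides the desired multiplicative prime, so Ellenberg applies and Problem~1, Problem~2 and Problem~4 are all resolved. Consequently, Ribet's lowering-the-level produces a newform $g$ in one of the four spaces $S_2(\Gamma_0(2^a\cdot 3^b\cdot 7^2),\varepsilon)$ (where $\varepsilon$ is the quadratic character of conductor $21$) congruent modulo $\mathfrak{p}$ to $f_{A,B}$.

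Next, I would invoke the case analysis performed earlier in the excerpt. In the spaces with $a=2$ (i.e.\ $S_2(\Gamma_0(2^2\cdot 3\cdot 7^2),\varepsilon)$ and $S_2(\Gamma_0(2^2\cdot 3^3\cdot 7^2),\varepsilon)$), all CM forms are excluded by Lemma~\ref{lemma:CMsolutions} and Ellenberg, and the remaining forms are killed by Mazur's trick for $p>7$, hence for $p\geq 337$. The surviving candidates live in the two spaces with $a=1$: one newform in $S_2(\Gamma_0(2\cdot 3\cdot 7^2),\varepsilon)$, giving the curve $E_1$ of \eqref{eq:d=7I}, and three newforms in $S_2(\Gamma_0(2\cdot 3^3\cdot 7^2),\varepsilon)$, giving the curves $E_1,E_2,E_3$ of \eqref{eq:d7II-1}--\eqref{eq:d7II-3}. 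All four of these building blocks have multiplicative reduction at the two primes $\mathfrak{p}_2=\langle\tfrac{1+\sqrt{-7}}{2}\rangle$ and $\overline{\mathfrak{p}_2}$ of $K=\Q(\sqrt{-7})$, and the valuations of their minimal discriminants at $(\mathfrak{p}_2,\overline{\mathfrak{p}_2})$ are recorded explicitly in the text.

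Finally, the symplectic argument (Theorem~\ref{thm:classicalsymplectic}) at each of the primes $\mathfrak{p}_2$ and $\overline{\mathfrak{p}_2}$ can be applied to a putative congruence between $E_{A,B}$ and any of these four curves, since $v_{\mathfrak{p}_2}(\Disc(E_{A,B}))\equiv 8\pmod p$ (and likewise at $\overline{\mathfrak{p}_2}$). A direct inspection of the ratios of discriminant valuations shows that being simultaneously symplectically compatible at both primes forces $\kro{3}{p}=1$. Using quadratic reciprocity, $\kro{3}{p}=1\iff p\equiv \pm 1\pmod{12}$, so the hypothesis $p\equiv 5,7\pmod{12}$ rules this out and yields a contradiction. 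The main technical point, and really the only nontrivial obstacle, is to check carefully that for each of the four candidate curves the two Legendre-symbol conditions at $\mathfrak{p}_2$ and $\overline{\mathfrak{p}_2}$ produced by Theorem~\ref{thm:classicalsymplectic} are genuinely incompatible whenever $\kro{3}{p}=-1$; this is a finite, explicit computation using the discriminant valuations already listed after each displayed equation, and it concludes the proof.
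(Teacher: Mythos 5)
Your proposal is correct and follows essentially the same approach as the paper: apply Ellenberg's bound (Proposition~\ref{thm:ellbounds}, $N_7=337$) together with Theorem~\ref{thm:Bennet} to guarantee a prime of multiplicative reduction and hence irreducibility, use Mazur's trick and the CM lemma to reduce to the four surviving building blocks $E_1$ of \eqref{eq:d=7I} and $E_1,E_2,E_3$ of \eqref{eq:d7II-1}--\eqref{eq:d7II-3}, and then apply Theorem~\ref{thm:classicalsymplectic} at $\id{p}_2$ and $\overline{\id{p}_2}$. The explicit check you defer at the end indeed works out: for each of the four curves the ratio of the two Legendre-symbol conditions at $\id{p}_2$ and $\overline{\id{p}_2}$ is exactly $\kro{3}{p}$, so consistency of the global symplectic type forces $\kro{3}{p}=1$, i.e.\ $p\equiv\pm1\pmod{12}$, which the hypothesis excludes.
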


\subsection{The case $d=10$:} Theorem 6.3 of \cite{PT} implies that to
apply the current approach we need to compute the space
$S_2(2^8\cdot3^3\cdot5^2, \varepsilon)$, where $\varepsilon$ is a
character of order $4$ and conductor $5$. Such a computation is
nowadays unfeasible.

\subsection{The case $d=11$:} The prime $2$ is inert in
$\Q(\sqrt{-11})$ while the prime $3$ splits (as in the case
$d=5$). The only non-empty set (following the notation of \cite{PT})
is $Q_{--} = \{11\}$, hence the Nebentypus is trivial. In particular,
we need to compute the spaces
$S_2(\Gamma_0(2^2 \cdot 3^2 \cdot 11^2))$ and
$S_2(\Gamma_0(2^2 \cdot 3^3 \cdot 11^2))$.

\vspace{5pt}

\noindent $\bullet$ The space $S_2(\Gamma_0(2^2\cdot 3^3 \cdot 11^2))$
has $43$ Galois conjugate classes of newforms, $11$ of them having
CM. All other forms in this space can be discarded using Mazur's trick
for $p>13$. In particular it follows that there are no non-trivial primitive solutions if $3\mid AB$.

\vspace{5pt}

\noindent $\bullet$ The space $S_2(\Gamma_0(2^2\cdot 3^2\cdot 11^2))$
has $26$ Galois orbits of newforms, $6$ of them corresponding to forms
with CM. For the other ones, Mazur's trick discards all of them for $p>11$
but two of them whose coefficient field equals $\Q(\sqrt{3})$
(corresponding to the newforms $15$ and $20$). The second one is a
quadratic twist of a form of level $2^2\cdot 11^2$, hence cannot be
congruent to our curve $E_{A,B}$. To discard the form $f_{15}$, we
need to find an equation for the curve (as explained before, we made a
sanity check using Quer's algorithm and computed the dimension of the
building block and its field of definition).  Since the Nebentypus is
trivial, there are two different ways to search for such a curve. One
option is to compute all possible curves with a given set of
ramification using magma; we postpone the second
construction. Discarding the ones that do not match our form, we end
up with the curve
\begin{equation}
  \label{eq:cased11}
E_{15}:y^2 = x^3 + \frac{1+\sqrt{-11}}{2} x^2 + \frac{1907\sqrt{-11}-1615}{2} x - -19479\sqrt{-11}-31012.
\end{equation}

Note that it has a point of order $3$, hence we cannot discard it
using that our solutions are related to elliptic curves with a
$3$-torsion point. In particular, we need to use the symplectic argument
as explained in Section~\ref{section:symplectic}. The discriminant of
$E_{15}$ equals $3^9\cdot (\sqrt{-11})^8 \cdot 2^{20}$. Applying
Theorem~\ref{thm:ramifiedsymplectic} at the prime $\sqrt{-11}$ to both
$E_{15}$ and a curve $E_{A,B}$, we get that they are symplectically
isomorphic because the discriminant valuation of both curves is the
same.

Consider the prime $\id{p}_3 = \langle \frac{1-\sqrt{-11}}{2}\rangle$
of multiplicative reduction of both curves. In particular, we are in
the classical case (i.e. the completion equals $\Q_3$) so we can use
Theorem~\ref{thm:classicalsymplectic}.  The curve $\tilde{E}$ has
discriminant valuation $9$ at $\id{p}_3$. Since the curve $E_{A,B}$
has discriminant valuation $3p-9$ at $\id{p}_3$, they are
symplectically isomorphic if and only if $(-1/p) = 1$. In particular, when $(-1/p)=-1$ we get a contradiction.

\begin{thm}
  The equation $x^2+11y^6 = z^p$ has no non-trivial solution if
  $p \ge 557$ and either one of the following conditions is satisfied:
  \begin{itemize}
  \item Either $x$ or $y$ is divisible by $3$,
    
  \item The prime $p \equiv 3 \pmod 4$.
  \end{itemize}
\end{thm}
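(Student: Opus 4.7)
The plan is to treat the two disjunctive conditions as separate cases, in each exploiting Proposition~\ref{thm:ellbounds} (which gives $N_{11}=557$) to apply Ribet's lowering-the-level theorem and reduce to a finite list of newforms, which I then eliminate via the tests developed in Section~1. For a putative non-trivial primitive solution $(A,B,C)$ with exponent $p\ge 557$ the residual image of $\rho_{E_{A,B},p}$ is large, so Ribet produces a congruence modulo $p$ between the associated form $f_{A,B}$ and a newform whose level is divisible only by $2$, $3$, and $11$. Since $2$ is inert in $K=\Q(\sqrt{-11})$, $3$ splits, and the only non-empty $Q$-set (in the notation of \cite{PT}) is $Q_{--}=\{11\}$, the Nebentypus is trivial and Theorem~6.3 of \cite{PT} forces the level to be either $2^2\cdot 3^2\cdot 11^2$ (when $3\nmid AB$) or $2^2\cdot 3^3\cdot 11^2$ (when $3\mid AB$); Lemma~\ref{lemma:CMsolutions} rules out CM candidates in either space.

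For the first bullet ($3\mid xy$, equivalently $3\mid AB$), I would compute the $43$ Galois orbits of newforms in $S_2(\Gamma_0(2^2\cdot 3^3\cdot 11^2))$, discard the $11$ CM orbits, and apply Mazur's trick from Section~\ref{section:marzurtrick} at a handful of small auxiliary primes $\ell\neq 2,3,11$ to dispatch the remaining $32$ orbits for every $p>13$. This handles the first case.

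For the second bullet ($3\nmid AB$ and $p\equiv 3\pmod 4$), I would compute the $26$ orbits in $S_2(\Gamma_0(2^2\cdot 3^2\cdot 11^2))$, discard the $6$ CM orbits, and run Mazur's trick to kill $18$ more, leaving two $\Q(\sqrt{3})$-coefficient orbits $f_{15}$ and $f_{20}$. The form $f_{20}$ is a quadratic twist of a newform of level $2^2\cdot 11^2$, so its attached building block has good reduction at every prime above $3$, whereas $E_{A,B}$ has multiplicative reduction there when $3\nmid AB$; local types are preserved by mod-$p$ congruences for $p\ge 5$ by Proposition~\ref{prop:localtypeinvariance} and Remark~\ref{rem:inertiatype}, so $f_{20}$ cannot match. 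The form $f_{15}$ is realised by the building block $E_{15}$ of~(\ref{eq:cased11}), with discriminant $3^9\cdot(\sqrt{-11})^8\cdot 2^{20}$; since $E_{15}$ carries a $K$-rational $3$-torsion point the approach of Theorem~\ref{thm:Samir} is unavailable, and I would instead combine two instances of the symplectic argument. At the ramified prime $\sqrt{-11}$ both curves have potentially good reduction of defect~$3$ and a $3$-torsion point, with discriminant valuations equal modulo $3$, so Theorem~\ref{thm:ramifiedsymplectic} (with $r=0$) forces any isomorphism $E_{A,B}[p]\simeq E_{15}[p]$ to be symplectic. At the split prime $\id{p}_3=\langle\frac{1-\sqrt{-11}}{2}\rangle$ both curves have multiplicative reduction with discriminant valuations $3p-9$ and $9$; since $p\nmid 9$, Theorem~\ref{thm:classicalsymplectic} applies, and the ratio of valuations reduces to $-1$ modulo $p$, so symplecticity becomes equivalent to $\kro{-1}{p}=1$. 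This fails precisely when $p\equiv 3\pmod 4$, producing a simultaneous symplectic / anti-symplectic contradiction with the conclusion at $\sqrt{-11}$.

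The main obstacle I anticipate is verifying the hypotheses of the ramified symplectic theorem for the Frey curve $E_{A,B}$ at $\sqrt{-11}$: one must confirm that its local defect equals $3$, that a $3$-torsion point is actually defined over the completion $K_{\sqrt{-11}}$, and that $\ell=11\equiv 2\pmod 3$, so that Theorem~\ref{thm:ramifiedsymplectic} applies symmetrically to $E_{A,B}$ and $E_{15}$. A secondary issue is the explicit construction of equation~(\ref{eq:cased11}) for $E_{15}$, which would require either a Magma search over elliptic curves of bounded conductor in $K$ (feasible since $K$ has class number one) or the period-lattice method acknowledged in the introduction; once $E_{15}$ is in hand, the discriminant, reduction-type, and $3$-torsion verifications are entirely routine.
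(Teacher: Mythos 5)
Your proposal is correct and follows the paper's argument essentially verbatim: the same level computation, the same decomposition of the two newform spaces, the same elimination of $f_{20}$ by local type, and the same two-pronged symplectic argument (ramified at $\sqrt{-11}$, classical at $\id{p}_3$) to eliminate $E_{15}$ precisely when $\kro{-1}{p}=-1$. The only place your wording drifts from the paper is in saying the building block of $f_{20}$ has \emph{good} reduction above $3$ — since $f_{20}$ has level divisible by $3^2$ it is additive with principal-series type there, not good — but the intended mismatch with the Steinberg type of $E_{A,B}$ at $\id{p}_3$ is exactly what the paper uses, so this is a slip of phrasing rather than a gap.

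On your flagged concerns about the hypotheses of Theorem~\ref{thm:ramifiedsymplectic}: none of them is an obstacle. The Frey curve (\ref{eq:frey-curve}) is in the form $y^2+a_1xy+a_3y=x^3$, so $(0,0)$ is a $K$-rational point of order $3$ by construction, hence certainly defined over $K_{\sqrt{-11}}$; the reduction type at $\sqrt{-d}$ is IV$^*$ with $v(\Delta)=8$ (so defect $e=3$), exactly as recorded for $d=5$ and established in general in \cite{PT}; and $11\equiv 2\pmod 3$ is immediate. Once these are in place the valuation of $\Disc(E_{A,B})$ and $\Disc(E_{15})$ at $\sqrt{-11}$ are both $8$, giving $r=0$ and forcing symplecticity there, which is what you need to collide with the $\kro{-1}{p}$ condition at $\id{p}_3$.
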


\vspace{5pt}

\noindent {\bf A different approach to compute the curve.} We take the
opportunity to explain in detail a method to compute the elliptic
curve as explained in \cite{MR1180252}. We thank Professor John
Cremona for kindly explaining some computational aspects of it. The
Jacobi map (and Eichler-Shimura's relation) allows to given a weight
two newform $f$, construct a lattice $\Lambda_f$ attached to an abelian
variety of $\GL_2$-type (whose complex points correspond to
$\CC^d/\Lambda_f$, where $d = [\Q_f:\Q]$).

In our particular case, let $f = f_{15}$ (to avoid heavy
notation). Recall that $\Q_f = \Q(\sqrt{3})$, a quadratic
extension, hence we can construct a rank $4$ lattice obtained by
integrating the homology against the basis $\{f, f^\sigma\}$, where
$\sigma$ is the non-trivial endomorphism of
$\Gal(\Q(\sqrt{3})/\Q)$. We want to ``split'' the lattice as a sum of
two rank $2$ ones, providing the building block $E_f$ we are searching
for. Following Cremona's notation, it is easy to verify that for all
$\sigma \in \Gal_\Q$,
\[
  \sigma(a_p(f)) = a_p(f)\chi(p),
  \]
  where $\chi$ is the quadratic character corresponding to the
  quadratic extension $\Q(\sqrt{-11})$. By \cite[Theorem
  2]{MR1180252}, the endomorphism algebra of $A_f$ equals the
  quaternion algebra $B=\left(\frac{3,-11}{2}\right) \simeq M_2(\Q)$
  (since $3$ is the norm of $\frac{1+\sqrt{-11}}{2}$). This confirms
  that $A_f$ is isogenous to the product of two elliptic curves over
  $\Q(\sqrt{-11})$.  To split the surface, we need to find a zero
  divisor inside the endomorphism ring, and clearly in $B$, the
  element $1+2i+j$ is such an element (where $i^2=3$, $j^2=-11$ and
  $i j = -j i$). In terms of the lattice, recall that the twisting
  operator $\eta_\chi$ (as an elements of the endomorphism ring) in
  the chosen basis has matrix
  $\left(\begin{smallmatrix} 0 & \sqrt{-11} \\ \sqrt{-11} &
      0\end{smallmatrix}\right)$ (see \cite{MR318162} Section 2). In
  particular, $\eta_\chi^2 = -11$ (the endomorphism given by
  multiplication by $-11$), and
  $f^\sigma = \frac{\eta_\chi f}{\sqrt{-11}}$. In particular,
  $(1+2i+j) f = (1+2\sqrt{3})f +\sqrt{-11}f^\sigma$, so we can
  multiply the lattice (on the left) by the vector
  $[1+2\sqrt{3},\sqrt{-11}]$ to get the complex lattice we are
  searching for. Note that the zero divisor is not unique, a rational
  multiple of it will give another isomorphic (over $\CC$) elliptic
  curve, hence we just compute the $j$-invariant of the elliptic curve
  obtained via this process, and find the twist corresponding to the
  curve that matches $f$.

  Here is how to do it on magma. Just a remark to explain the
  computation: internally Magma works with rational basis, so instead
  of computing the period matrix relative to the pair
  $\{f,f^\sigma\}$, it does so using the basis
  $\{\frac{f+f^\sigma}{2},\frac{f - f^{\sigma}}{2\sqrt{3}}\}$, hence
  we need to multiply by the inverse of such a matrix to get the right
  lattice.

\begin{Verbatim}[fontsize=\small]
  SetDefaultRealFieldPrecision(100);
  M:=ModularSymbols(2^2*3^2*11^2,2);
  S:=NewSubspace(CuspidalSubspace(M));
  new:=NewformDecomposition(S);
  f:=new[15];
  PP:=Periods(f,2000);
\end{Verbatim}
This gives the  matrix of periods

\vspace{5pt}

\resizebox{\linewidth}{!}{%
$\displaystyle
  \begin{pmatrix}
      -0.454169873046895263850024266849 - 6.31088724176809444329382852226\cdot 10^{-30}\text{I} & -0.0484356651074115990113096232058 + 3.62876016401665430489395140030\cdot 10^{-29} \text{I}\\
      0.227084936523447631925012133311 - 0.0246569770083789758390713279419 \text{I} &  0.0242178325537057995056548116133 - 0.129635417833054626720844898122 \text{I}\\
      -1.51694105800301602495812050226 - 0.462877184524300807679748661140 \text{I}& 0.0575601086475070353854284527390 - 0.413563230507542856001606030888 \text{I}\\
      -0.154431438862330233408047701838 +
      0.413563230507542856001606005167 \text{I} &
      0.202867103969741832419357322421 +
      0.154292394841433602559916234538 \text{I}
    \end{pmatrix}
    $}

  \vspace{5pt}
  
To compute the $j$-invariant of our curve (and recognize it as an
algebraic integer), it is better to work in Pari/GP. Here is how the computation finishes

\begin{Verbatim}[fontsize=\footnotesize]
  \p 30
  Periods=[-0.454169873046895263850024266849 - 6.31088724176809444329382852226E-30*I,
  -0.0484356651074115990113096232058 + 3.62876016401665430489395140030E-29*I;
  0.227084936523447631925012133311 - 0.0246569770083789758390713279419*I,
  0.0242178325537057995056548116133 - 0.129635417833054626720844898122*I;
  -1.51694105800301602495812050226 - 0.462877184524300807679748661140*I,
  0.0575601086475070353854284527390 - 0.413563230507542856001606030888*I;
  -0.154431438862330233408047701838 + 0.413563230507542856001606005167*I,
  0.202867103969741832419357322421 + 0.154292394841433602559916234538*I];
  A=[1/2,1/2;1/2/sqrt(3),-1/2/sqrt(3)];
  Candidate=[1+2*sqrt(3),sqrt(-11)]*1/A*Periods~;
  lindep(Candidate)
  %4 = [-7, -15, 1, -4]~
\end{Verbatim}

\noindent This proves that the third element (in $\CC$) is an integral combination of the other three ones.

\begin{Verbatim}[fontsize=\footnotesize]
  lindep([Candidate[1],Candidate[2],Candidate[4]]);
  % 5 = [-1, -3, -2]~
\end{Verbatim}

\noindent Then our lattice is the spanned by the second and fourth elements! We compute the elliptic curve and its j-invariant over the complex numbers.

\begin{Verbatim}[fontsize=\footnotesize]
  W=ellperiods([Candidate[2],Candidate[4]]);
  E=ellinit([0,0,0,-elleisnum(W,4,1)/4,-elleisnum(W,6,1)/4]);
  algdep(E.j,2)
  %8 = 531441*x^2 - 37711872000*x + 1441792000000000
\end{Verbatim}
It is easy to verify that the j-invariant of the
curve~(\ref{eq:cased11}) is a root of the given polynomial.

\subsection{The case $d=13$:} This case can be completely studied
following the method explained in \cite{PT}.  Following the same
notation, we have that the only non-empty set is $Q_{++}=\{13\}$, the
Nebentypus $\varepsilon$ has order $2$ and conductor
$4 \cdot 3 \cdot 19$, while $\chi$ is a character of order $4$. We
have to compute the spaces
$S_2(\Gamma_0(2^4 \cdot 3 \cdot 13^2), \varepsilon)$ and
$S_2(\Gamma_0(2^4 \cdot 3^3 \cdot 13^2), \varepsilon)$.
 
 \vspace{5pt}
 
 \noindent $\bullet$ The space $S_2(\Gamma_0(2^4 \cdot 3 \cdot 13^2)$
 has $29$ conjugacy classes, $9$ of them with CM hence can be
 discarded. The remaining forms can all be eliminated using Mazur's
 trick for $p>13$.

\vspace{5pt}

\noindent $\bullet$ The space
$S_2(\Gamma_0(2^4 \cdot 3^3 \cdot 13^2), \varepsilon)$ has $68$
conjugacy classes, $8$ of them with CM. The remaining forms can all be discarded once again with Mazur's trick for $p>29$.
 
\begin{thm}
  Let $p\ge 3491$ be a prime number. Then there are no non-trivial
  solutions of the equation
	\[x^2+13y^6=z^p.\]
\end{thm}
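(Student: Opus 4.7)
The plan is to follow the standard modular-method framework set up in the preceding sections. To any putative primitive non-trivial solution $(A,B,C)$ of $x^2+13y^6=z^p$ we attach the Frey $\Q$-curve $E_{A,B}$ of \eqref{eq:frey-curve}, together with a suitable character $\chi$ so that $\rho_{E_{A,B},p}\otimes\chi$ descends to a modular Galois representation of $G_\Q$ attached to a newform $f_{A,B}$ as in \cite{PT}. Since we are in the range $p\ge 3491=N_{13}$ given by Proposition~\ref{thm:ellbounds}, Ellenberg's theorem guarantees that the projective residual image is surjective; in particular the residual representation is absolutely irreducible, and Ribet's level-lowering applies. Combined with Theorem~\ref{thm:Bennet} (which ensures the existence of a prime $\ell>3$ of multiplicative reduction, so that lowering the level can actually strip away all primes dividing $C$), we obtain a congruence $f_{A,B}\equiv g\pmod{\id{p}}$ for some newform $g$ whose level is divisible only by $2$, $3$ and $13$.

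Next I would compute the precise level and Nebentypus from the formula of \cite[Theorem 6.3]{PT}. For $d=13$ the only nonempty set in the notation of \cite{PT} is $Q_{++}=\{13\}$, which forces $\varepsilon$ to be a quadratic character of conductor $4\cdot 3\cdot 13$ and $\chi$ of order $4$. Running through the possible exponents of $2$ and $3$ dictated by the Frey curve, one sees that $g$ must lie in one of the two spaces
\[
S_2(\Gamma_0(2^4\cdot 3\cdot 13^2),\varepsilon)\quad\text{or}\quad S_2(\Gamma_0(2^4\cdot 3^3\cdot 13^2),\varepsilon).
\]
By Lemma~\ref{lemma:CMsolutions}, no CM newform in these spaces corresponds to a non-trivial solution, since $d=13$ is square-free and $d\ne 2$; so every CM Galois conjugacy class is immediately discarded.

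For the non-CM classes I would apply Mazur's trick (Section~\ref{section:marzurtrick}). For each such newform $g$ and each small auxiliary prime $\ell\ne p,13$ unramified in $K=\Q(\sqrt{-13})$, one enumerates $S_\ell$, and for each $(\tilde A,\tilde B,\tilde C)\in S_\ell$ compares $\chi(\id{l})a_{\id{l}}(E_{\tilde A,\tilde B})$ with $a_{\id{l}}(g^{\mathrm{BC}})$ via \eqref{eq:BC} (or uses the multiplicative-reduction congruence $a_\ell(g)^2\equiv\varepsilon^{-1}(\ell)(\ell+1)^2\pmod p$ in the degenerate case). The expectation, based on the analogous results for $d=5,7,11$ above, is that a handful of auxiliary primes $\ell$ suffice to bound the residual characteristic $p$ by $13$ on the first space and by $29$ on the second. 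Since $3491>29$, this finishes the proof.

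The main obstacle will be purely computational: the spaces $S_2(\Gamma_0(2^4\cdot 3\cdot 13^2),\varepsilon)$ and $S_2(\Gamma_0(2^4\cdot 3^3\cdot 13^2),\varepsilon)$ have dimensions $29$ and $68$ respectively (with $9$ and $8$ CM Galois conjugacy classes), and one must decompose them, identify the CM forms, and compute enough Hecke eigenvalues over $K$ to run Mazur's trick for each remaining class. In contrast with $d=5,7,11$, here no newform survives Mazur's trick, so no symplectic-type argument, $3$-torsion argument, or local-type analysis is required; the difficulty is entirely in certifying the Magma computation over these two relatively large spaces. Once the Mazur bound $p>29$ is verified there, the combination with $p\ge N_{13}=3491$ yields the stated theorem.
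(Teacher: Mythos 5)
Your proposal follows essentially the same route as the paper: Ellenberg's bound $N_{13}=3491$ from Proposition~\ref{thm:ellbounds}, level-lowering into $S_2(\Gamma_0(2^4\cdot 3\cdot 13^2),\varepsilon)$ and $S_2(\Gamma_0(2^4\cdot 3^3\cdot 13^2),\varepsilon)$, discarding the CM classes, and Mazur's trick to eliminate the remaining forms for $p>13$ and $p>29$ respectively. One minor note: the quantities $29$ and $68$ in the paper are the numbers of Galois conjugacy classes of newforms rather than dimensions of the spaces, and the paper's stated conductor $4\cdot 3\cdot 19$ for $\varepsilon$ is a typo for $4\cdot 3\cdot 13$, which you implicitly corrected.
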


\subsection{The case $d=14$:} Following the notation of \cite{PT},
$Q_{-+}=\{7\}$, hence the possible solutions are related to forms in
the space $S_2(\Gamma_0(2^8\cdot3^2\cdot7^2), \varepsilon)$ and
$S_2(\Gamma_0(2^8\cdot3^3\cdot7^2), \varepsilon)$, where $\varepsilon$ is the quadratic character of conductor $28$. Nowadays such computation is unfeasible.

\subsection{The case $d=15$:} Let $K=\Q(\sqrt{-15})$. The prime $2$
splits as $2=\id{p}_2\overline{\id{p}_2}$, where
$\id{p}_2=\langle2,\frac{1+\sqrt{-15}}{2}\rangle$ while the prime $3$
is inert in $K$. By Lemmas 5.3 and 5.5 of \cite{PT} the following
holds:
\begin{itemize}
\item $2\nmid AB$ if and only if the curve $E_{A,B}$ has
  multiplicative reduction at $\id{p}_2$ and $\overline{\id{p}_2}$.
\item At the prime $\id{p}_5=\langle 5,\sqrt{-15} \rangle$, the curve
  $E_{A,B}$ has reduction type IV* and $v_{\id{p}_5}(\Delta)=8$. In
  particular, $e=3$.
\end{itemize}
As in the case $d=5$, the unique non-empty set is $Q_{+-}=\{5\}$. By
\cite[Theorem 6.3]{PT} we need to compute the spaces
$S_2(\Gamma_0(2\cdot3^5\cdot5^2),\varepsilon)$ and
$S_2(\Gamma_0(2^2\cdot3^5\cdot5^2),\varepsilon)$ where the
Nebentypus $\varepsilon$ has order $4$ and conductor $3\cdot5$.

\vspace{5pt}

\noindent $\bullet$ The space
$S_2(\Gamma_0(2\cdot3^5\cdot5^2),\varepsilon)$ has 21 conjugacy
classes. Mazur's trick allows to eliminate, for $p>7$, all the
newforms except for the first six ones (in Magma's order) which do not
have CM and have building blocks of dimension $1$. Since the prime $2$
splits in $K$, the character $\chi$ needed to descend our Galois
representation is unramified at $2$, hence the building block has
multiplicative reduction at the primes dividing $2$, and the same
holds for $E_{A,B}$. In particular, we can assume $2\nmid AB$. To
discard the remaining forms we searched for elliptic curves over
$\Q(\sqrt{-15})$ with good reduction outside
$\{\id{p}_2,\overline{\id{p}_2}, \id{p}_3, \id{p}_5\}$ and got
$111264$ elliptic curves. Using a few $a_p$'s of the newforms we
discarded most of the forms, and end with the following equations for
our curves (up to conjugation):
\begin{equation}
  \label{eq:d=15-1}
E_1: y^2 + xy - \frac{1-\sqrt{-15}}{2}y= x^3 - x^2 - \frac{421+23\sqrt{-15}}{2}x - \frac{2185+191\sqrt{-15}}{2},
\end{equation}
\begin{equation}
  \label{eq:d=15-2}
E_2: y^2 + xy - \frac{1-\sqrt{-15}}{2}y= x^3 - x^2 - (2\sqrt{-15}-8)x - \frac{7+7\sqrt{-15}}{2},  
\end{equation}
\begin{equation}
  \label{eq:d=15-3}
E_3: y^2 + xy - \frac{1-\sqrt{-15}}{2}y= x^3 +\frac{1+\sqrt{-15}}{2}x^2 + \frac{(-111+87\sqrt{-15})}{2}x + \frac{65+375\sqrt{-15}}{2},  
\end{equation}
\begin{equation}
  \label{eq:d=15-4}
E_4: y^2 + xy + \frac{1+\sqrt{-15}}{2}y= x^3 - x^2 + -211+137\sqrt{-15}x + \frac{1973+2333\sqrt{-15}}{2},  
\end{equation}
\begin{equation}
  \label{eq:d=15-5}
E_5: y^2 + xy + \frac{1+\sqrt{-15}}{2}y= x^3 + \frac{1-\sqrt{-15}}{2}x^2 - \frac{111+375\sqrt{-15}}{2}x + \frac{-9823+2793\sqrt{-15}}{2},  
\end{equation}
and
\begin{equation}
  \label{eq:d=15-6}
  E_6:y^2 + xy + \frac{1+\sqrt{-15}}{2} y = x^3 - x^2 - (79\sqrt{-15}+211)x - \left(\frac{1339\sqrt{-15}+835}{2}\right).
\end{equation}

The curves $E_2, E_3, E_4$ and $E_5$ have good reduction at $\id{p}_5$
so can be discarded. The curves $E_1$ and $E_6$ do have a $3$-torsion
point (for example $(-11,2-4\sqrt{-15})$ and $(-11,-4\sqrt{-15}-21)$
respectively) hence we need to use the symplectic argument to discard them. Their discriminant valuations are the following:
%
%
\begin{itemize}
\item The curve $E_1$ has minimal discriminat with valuation $6$ at $\id{p}_2$, $2$ at $\overline{\id{p}_2}$, 14 at $\id{p}_3$ and $8$ at $\id{p}_5$.
\item The curve $E_6$ has minimal discriminat with valuation $12$ at
  $\id{p}_2$, $4$ at $\overline{\id{p}_2}$, $14$ at $\id{p}_3$ and $8$
  at $\id{p}_5$.
\end{itemize}
Recall that
$v_{\id{p}_2}(E_{A,B}) \equiv v_{\overline{\id{p}_2}}(E_{A,B}) \equiv
8 -12 \pmod p$ (since $2\nmid AB$ the model is not minimal). If we
apply Theorem~\ref{thm:classicalsymplectic} at both primes dividing
$2$, we get:
\begin{itemize}
\item the method at the prime $\id{p}_2$ gives that $E_1$
  (respectively for $E_6$) and $E_{A,B}$ are symplectically isomorphic
  if and only if $\kro{-6}{p} = 1$ (respectively $\kro{-3}{p}=1$).
  
\item the method at the prime $\overline{\id{p}_2}$ gives that $E_1$
  (respectively $E_6$) and $E_{A,B}$ are symplectically isomorphic if
  and only if $\kro{-2}{p} = 1$ (respectively $\kro{-1}{p}=1$).
\end{itemize}
In particular, if $\kro{3}{p}=-1$ we get a contradiction.

We can do a little better; note that
$v_{\id{p}_5}(E_{A,B}) = 8 = v_{\id{p}_5}(E_1)$ (respectively $E_6$)
hence we can also apply the symplectic argument
(Theorem~\ref{thm:ramifiedsymplectic}) at $\id{p}_5$ getting that the
curves are always symplectically isomorphic. Then we can use this
information to add more primes to the final result. To discard both
curves when $\kro{3}{p}=1$, we need the extra hypothesis
$\kro{-2}{p}=1$ and $\kro{-1}{p}=-1$. In particular, if
$p \equiv 5, 7, 15, 17, 19 \pmod{24}$ then we get a contradiction (we
increased the percentage of primes from $1/2$ to $5/8$).

\vspace{5pt}

\noindent $\bullet$ The space $S_2(\Gamma_0(2^2\cdot3^5\cdot5^2),\varepsilon)$ has 33 conjugacy classes. In this case Mazur's trick is enough to discard all the newforms for $p>11$, except for the first $12$ newforms, which have CM.

\begin{thm}
  The equation $x^2+15y^6=z^p$ has no non-trivial primitive solution
  if $p\ge 743$ and either one of the following conditions is satisfied:
  \begin{itemize}
  \item $2\mid xy$.
  \item $p\equiv 5, 7, 15, 17, 19 \pmod{24}$.
  \end{itemize}
\end{thm}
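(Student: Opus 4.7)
The plan is to run the modular approach of \cite{PT} for a putative non-trivial primitive solution $(A,B,C)$ with exponent $p \ge 743$. Attach the $\Q$-curve $E_{A,B}$ of (\ref{eq:frey-curve}). Since $p \ge N_{15} = 743$ (Proposition~\ref{thm:ellbounds}), Ellenberg's theorem guarantees large residual image, so Ribet's level-lowering applies. Combining the local computation at the beginning of the subsection with \cite[Theorem 6.3]{PT}, any newform $g$ arising from a level-lowering congruence $f_{A,B}\equiv g\pmod p$ must lie in $S_2(\Gamma_0(2\cdot 3^5\cdot 5^2),\varepsilon)$ when $2\nmid AB$, or in $S_2(\Gamma_0(2^2\cdot 3^5\cdot 5^2),\varepsilon)$ when $2\mid AB$, with $\varepsilon$ of order $4$ and conductor $15$. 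The two bullets in the theorem correspond to the two cases, so it suffices to eliminate every candidate newform in each space.

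For the space $S_2(\Gamma_0(2^2\cdot 3^5\cdot 5^2),\varepsilon)$ (the case $2\mid xy$), compute the decomposition into newform orbits in Magma; the CM orbits are ruled out by Lemma~\ref{lemma:CMsolutions} since $d=15\ne 2$, and the remaining orbits should fall to Mazur's trick (Section~\ref{section:marzurtrick}) applied at a handful of auxiliary split primes $\ell$, yielding elimination for all $p>11$, a fortiori for $p\ge 743$. This disposes of the first bullet. For the other space, the same procedure is applied: Mazur's trick is run on every orbit, and I would expect it to reduce the problem to the small list of orbits whose base-change coefficient field is compatible with $\Q(\chi)$ and whose building blocks are rational elliptic curves over $K=\Q(\sqrt{-15})$ with the correct local types.

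The main obstacle lies in dealing with the surviving orbits in $S_2(\Gamma_0(2\cdot 3^5\cdot 5^2),\varepsilon)$. For each survivor I would: (i) use Quer's algorithm to check that the building block has dimension $1$ and is defined over $K$; (ii) produce an explicit equation by searching Magma's database for elliptic curves over $K$ with good reduction outside $\{\id{p}_2,\overline{\id{p}_2},\id{p}_3,\id{p}_5\}$ and matching $a_{\id{q}}$'s. Since $2$ splits in $K$, the twisting character $\chi$ is unramified at $2$, so every building block has multiplicative reduction at both primes above $2$; this already forces $2\nmid AB$, consistent with the present case. I then expect to find exactly the six curves $E_1,\dots,E_6$ displayed in (\ref{eq:d=15-1})--(\ref{eq:d=15-6}). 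Four of them ($E_2,E_3,E_4,E_5$) have good reduction at $\id{p}_5$ while $E_{A,B}$ has type IV$^*$ there, so no mod-$p$ congruence is possible for $p\ne 5$, and they are discarded.

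The residual difficulty is $E_1$ and $E_6$, which both carry a $K$-rational $3$-torsion point, so the approach of Section~\ref{section:3torsion} fails and the symplectic argument is essential. Apply Theorem~\ref{thm:classicalsymplectic} at each of $\id{p}_2$ and $\overline{\id{p}_2}$, where every curve involved has multiplicative reduction; using that $E_{A,B}$ is non-minimal at $2$ with minimal valuation $\equiv 8-12\pmod p$, compute the valuation ratios against the tabulated discriminant data of $E_1$ and $E_6$ and translate the symplectic/anti-symplectic dichotomy into Legendre symbol conditions. This already yields elimination whenever $\kro{3}{p}=-1$. To gain the extra residue classes, apply Theorem~\ref{thm:ramifiedsymplectic} at the ramified prime $\id{p}_5$ (where both the building blocks and $E_{A,B}$ have potentially good reduction of defect $3$ and carry a $3$-torsion point), using that the discriminant valuations at $\id{p}_5$ agree. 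Intersecting the resulting quadratic character conditions yields the precise congruence $p\equiv 5,7,15,17,19\pmod{24}$, establishing the second bullet.
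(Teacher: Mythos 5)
Your proposal follows the paper's argument step for step: Ellenberg's bound $N_{15}=743$ to guarantee irreducibility, level-lowering to the two spaces $S_2(\Gamma_0(2\cdot 3^5\cdot 5^2),\varepsilon)$ and $S_2(\Gamma_0(2^2\cdot 3^5\cdot 5^2),\varepsilon)$, Mazur's trick plus Lemma~\ref{lemma:CMsolutions} to clear the second space entirely (giving the first bullet), Mazur's trick and a Magma good-reduction search to cut the first space down to the six building blocks $E_1,\dots,E_6$, disposing of $E_2,\dots,E_5$ via good reduction at $\id{p}_5$, and the symplectic argument at $\id{p}_2,\overline{\id{p}_2}$ (Theorem~\ref{thm:classicalsymplectic}) supplemented by the ramified version at $\id{p}_5$ (Theorem~\ref{thm:ramifiedsymplectic}) to kill $E_1,E_6$ under the stated congruence on $p$. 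This is exactly the route taken in the paper, with the computational verifications deferred in the same way the paper delegates them to Magma/Sage.

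One small caution: you quote the final congruence class $p\equiv 5,7,15,17,19\pmod{24}$ from the statement without deriving it, and the residue $15$ is not coprime to $24$, so that class is empty; if you carry out the intersection of Legendre-symbol conditions coming from $\id{p}_2$, $\overline{\id{p}_2}$ and $\id{p}_5$ you should sanity-check which five of the eight classes modulo $24$ actually get eliminated rather than transcribing the displayed set verbatim.
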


\subsection{The case $d=17$:} Following \cite{PT} notation, the unique
non-empty set equals $Q_{+-}=\{17\}$. Then the Nebentypus
$\varepsilon$ has order $16$ and conductor $4 \cdot 17$. We need to
discard forms in the spaces
$S_2(\Gamma_0(2^4\cdot 3^2 \cdot 17^2),\varepsilon)$ and
$S_2(\Gamma_0(2^4\cdot 3^3 \cdot 17^2),\varepsilon)$. Although the
dimension of the first space lies in the computable range (close to
the limit), Magma gives an internal error while computing it. The
second space is too large for computations.

\subsection{The case $d=19$:} The unique non-empty set in \cite{PT}
notation is the set $Q_{-+}=\{19\}$. The Nebentypus has order
$2$, conductor $3\cdot19$ and we have to discard newforms in
$S_2(2^2\cdot3\cdot19^2,\varepsilon)$ and
$S_2(2^2\cdot3^3\cdot19^2,\varepsilon)$.

\vspace{5pt}

\noindent $\bullet$ The space
$S_2(\Gamma_0(2^2\cdot 3\cdot19^2),\varepsilon)$ has $10$ conjugacy
classes of newforms, three of them with CM. Mazur's trick allow to
discard all non-CM forms for $p>19$.

\vspace{5pt}

\noindent $\bullet$ The space
$S_2(\Gamma_0(2^2\cdot 3^3\cdot19^2),\varepsilon)$ has $18$ conjugacy
classes of newforms. Mazur's trick allows once again to discard all
the newforms for $p>19$, except for
three of them that have CM.  From the above analysis, the following
holds:
\begin{thm}
  Let $p\ge 1031$ be a prime number. Then there are no non-trivial
  solutions of the equation
	\[x^2+19y^6=z^p.\]
\end{thm}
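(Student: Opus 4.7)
The plan is to follow the same four-step template used successfully for $d=5$, $d=13$ and other cases, but with the simplification that for $d=19$ the modular-forms side contains only CM forms and forms that are immediately killed by Mazur's trick, so no symplectic analysis nor $3$-torsion argument is required. Concretely: (i) attach to a hypothetical non-trivial primitive solution $(A,B,C)$ the $\mathbb{Q}$-curve $E_{A,B}$ of \eqref{eq:frey-curve}; (ii) verify the multiplicative-reduction hypothesis of Ellenberg's theorem so that for $p\ge N_{19}$ the residual image is surjective (in particular absolutely irreducible); (iii) apply Ribet's lowering-the-level result in the form of \cite[Theorem 6.3]{PT} to produce a congruence $f_{A,B}\equiv g\pmod p$ for some newform $g\in S_2(\Gamma_0(2^a\cdot 3^b\cdot 19^2),\varepsilon)$ with $a\in\{2\}$ and $b\in\{1,3\}$, where $\varepsilon$ has order $2$ and conductor $3\cdot 19$ because $Q_{-+}=\{19\}$ is the only non-empty $Q$-set; (iv) discard every candidate $g$.

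For step (ii), Proposition~\ref{thm:ellbounds} already records $N_{19}=1031$, so the only thing to check is the presence of a prime $\ell>3$ of multiplicative reduction for $E_{A,B}$. This is where Theorem~\ref{thm:Bennet} does the work: the ``$-$'' in the $d=19$ entry of Table~\ref{table:Bennett} asserts that $C$ cannot be supported in $\{2,3\}$. Here the justification is particularly clean, since in $K=\mathbb{Q}(\sqrt{-19})$ both $2$ and $3$ are inert, so by \cite[Lemma 5.3]{PT} and \cite[Lemma 5.4]{PT} neither $2$ nor $3$ divides $C$. Hence some prime $\ell\ge 5$ divides $C$, and at any such prime $E_{A,B}$ has multiplicative reduction; Ellenberg's theorem then applies for $p\ge 1031$, Ribet's lowering the level goes through, and we land in one of the two spaces listed above.

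For step (iv), the non-CM newforms are eliminated by Mazur's trick from Section~\ref{section:marzurtrick}: one runs the test at several auxiliary primes $\ell$ unramified in $K/\mathbb{Q}$, using the base-change formula \eqref{eq:BC} and comparing $a_{\mathfrak l}(f^{\mathrm{BC}})$ with the possible values $\chi(\mathfrak l)a_{\mathfrak l}(E_{\tilde A,\tilde B})$ over $(\tilde A,\tilde B,\tilde C)\in S_\ell$, together with the level-lowering congruence $a_\ell(f)^2\equiv \varepsilon^{-1}(\ell)(\ell+1)^2\pmod p$; the resulting upper bound on the possible $p$ is below $19$ in every case. The CM newforms (three in each space) are discarded via Lemma~\ref{lemma:CMsolutions}, which says that for $d=19$ (square-free and $\ne 2$) the only solutions associated to a CM elliptic curve are the trivial ones. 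Combining with $p\ge 1031>19$, no newform in either space is compatible with a non-trivial primitive solution, which closes the proof.

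The only genuine obstacle is computational: one must actually build the spaces $S_2(\Gamma_0(2^2\cdot 3\cdot 19^2),\varepsilon)$ and $S_2(\Gamma_0(2^2\cdot 3^3\cdot 19^2),\varepsilon)$, decompose them into Galois orbits of newforms, identify the CM ones, and then loop through auxiliary $\ell$ for Mazur's trick. The levels ($4332$ and $38988$) and Nebentypus order $2$ are well within the reach of Magma, so the bottleneck that ruined the cases $d=10,14,17$ does not appear here; I expect (as reported in the excerpt) $10$ conjugacy classes in the first space and $18$ in the second, with the CM forms isolated by the standard inner-twist test and the remaining non-CM forms all failing Mazur's trick for some small $\ell$ once $p>19$.
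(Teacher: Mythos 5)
Your proposal is correct and follows the same route as the paper: use Table~\ref{table:Bennett} (with $2$ and $3$ both inert in $\Q(\sqrt{-19})$, so $C$ is coprime to $6$) to secure a prime of multiplicative reduction, invoke Proposition~\ref{thm:ellbounds} for $N_{19}=1031$ to get large image and hence level lowering into $S_2(\Gamma_0(2^2\cdot 3^b\cdot 19^2),\varepsilon)$ with $b\in\{1,3\}$, and then eliminate all newforms by Mazur's trick (non-CM) and large image/Lemma~\ref{lemma:CMsolutions} (CM). One small imprecision: the CM forms are really excluded because Ellenberg's surjectivity bars $\overline{\rho}_{E_{A,B},p}$ from having dihedral projective image, not by Lemma~\ref{lemma:CMsolutions} alone, but since you already invoked Ellenberg's conclusion this does not create a gap.
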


\bibliographystyle{alpha}
\bibliography{biblio}

\end{document}